\newcommand{\binomial}[2]{\left( \begin{array}{c} {#1} \\
                        {#2} \end{array} \right)}
\newtheorem{theorem}{Theorem}[section]
\newtheorem{prop}[theorem]{Proposition}
\newtheorem{lemma}[theorem]{Lemma}
\theoremstyle{remark}
\theoremstyle{definition}
\def\K{K} %sandpile group
\def\line{\mathcal{L}} %directed line graph
\def\uline{\mathrm{line\,}} %undirected line graph
\def\outdeg{\mathrm{outdeg}}
\def\indeg{\mathrm{indeg}}
\def\iindeg{\mathrm{indeg}}
\def\source{\mathtt{s}}
\def\target{\mathtt{t}}
\def\ssource{\mathtt{s}}
\def\ttarget{\mathtt{t}}
\def\root{\mathrm{root}}
\def\Im{\mathrm{Im}}
\def\Id{\mathrm{Id}}
\def\xx{\mathbf{x}}
\def\Kautz{\mathrm{Kautz}}
\def\Sylow{\mathrm{Sylow}}
\DeclareSymbolFont{AMSb}{U}{msb}{m}{n}
\DeclareMathSymbol{\C}{\mathbin}{AMSb}{"43} 
\DeclareMathSymbol{\EE}{\mathbin}{AMSb}{"45} 
\DeclareMathSymbol{\N}{\mathbin}{AMSb}{"4E} 
\DeclareMathSymbol{\PP}{\mathbin}{AMSb}{"50} 
\DeclareMathSymbol{\Q}{\mathbin}{AMSb}{"51} 
\DeclareMathSymbol{\R}{\mathbin}{AMSb}{"52} 
\DeclareMathSymbol{\Z}{\mathbin}{AMSb}{"5A}
\begin{document}

\title[Sandpile Groups and Spanning Trees]{Sandpile Groups and Spanning Trees of Directed Line Graphs}
\author{Lionel Levine}

\address{Lionel Levine, Department of Mathematics, Massachusetts Institute of Technology, Cambridge, MA 02139. {\tt \url{http://math.mit.edu/~levine}}}
\thanks{The author is supported by an NSF postdoctoral fellowship.}

\date{March 24, 2010}
\keywords{critical group, de Bruijn graph, iterated line digraph, Kautz graph, matrix-tree theorem, oriented spanning tree, weighted Laplacian}
\subjclass[2000]{05C05, 05C20, 05C25, 05C50}
 
\begin{abstract}
We generalize a theorem of Knuth relating the oriented spanning trees of a directed graph~$G$ and its directed line graph~$\line G$.
The sandpile group is an abelian group associated to a directed graph, whose order is the number of oriented spanning trees rooted at a fixed vertex.
In the case when~$G$ is regular of degree~$k$, we show that the sandpile group of~$G$ is isomorphic to the quotient of the sandpile group of $\line G$ by its $k$-torsion subgroup.
As a corollary we compute the sandpile groups of two families of graphs widely studied in computer science, the de~Bruijn graphs and Kautz graphs.  
\end{abstract}

\maketitle

\section{Introduction}

Let $G=(V,E)$ be a finite directed graph, which may have loops and multiple edges.  Each edge $e \in E$ is directed from its source vertex $\source(e)$ to its target vertex $\target(e)$. The \emph{directed line graph} $\line G = (E,E_2)$ has as vertices the edges of $G$, and as edges the set
	\[ E_2 = \{ (e_1,e_2) \in E\times E \,|\, \source(e_2)=\target(e_1) \}. \]
For example, if $G$ has just one vertex and~$n$ loops, then $\line G$ is the complete directed graph on $n$ vertices (which includes a loop at each vertex).  If $G$ has two vertices and no loops, then $\line G$ is a bidirected complete bipartite graph.

An \emph{oriented spanning tree} of $G$ is a subgraph containing all of the vertices of~$G$, having no directed cycles, in which one vertex, the \emph{root}, has outdegree~$0$, and every other vertex has outdegree~$1$.  
The number~$\kappa(G)$ of oriented spanning trees of~$G$ is sometimes called the \emph{complexity} of~$G$.

Our first result relates the numbers $\kappa(\line G)$ and $\kappa(G)$.  Let $\{x_e\}_{e\in E}$ and $\{x_v\}_{v\in V}$ be indeterminates, and consider the polynomials 
\pagebreak
	\[ \kappa^{edge}(G,\xx) = \sum_T \prod_{e \in T} x_e \]
%and
	\[ \kappa^{vertex}(G,\xx) = \sum_T \prod_{e \in T} x_{\ttarget(e)}. \] 
%= \sum_T \prod_{v \in V} x_v^{\iindeg_T(v)}. \]
The sums are over all oriented spanning trees~$T$ of~$G$.
%, and 
%	\[ \indeg_T(v) = \#\{e \in T \,|\, \target(e)=v \}. \]
%%denotes the indegree of vertex~$v$ in the tree~$T$.  
%Thus
%%the polynomial
%$\kappa^{edge}$ enumerates the spanning trees of $G$ according to which edges are present, while~$\kappa^{vertex}$ enumerates the spanning trees of $G$ by the coarser information of the indegrees of the vertices.

Write 
	\[ \indeg(v) = \# \{e \in E \,|\, \target(e)=v\} \]
%and
	\[ \outdeg(v) = \# \{e \in E \,|\, \source(e)=v\} \]
for the indegree and outdegree of vertex~$v$ in~$G$.  We say that~$v$ is a \emph{source} if $\indeg(v)=0$.  

\begin{theorem}
\label{thm:weightedtreeenum}
Let $G=(V,E)$ be a finite directed graph with no sources.  Then
	\begin{equation} \label{weightedtreeenum} \kappa^{vertex}(\line G,\xx) = \kappa^{edge}(G,\xx) \prod_{v \in V} \left( \sum_{\ssource(e)=v} x_e \right)^{\mbox{\em \scriptsize indeg}(v)-1}. \end{equation}
\end{theorem}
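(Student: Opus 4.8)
The plan is to compute both sides via the weighted Matrix--Tree Theorem and to realize the product in \eqref{weightedtreeenum} as the Jacobian factor of an explicit reduction of the Laplacian of $\line G$ to the Laplacian of $G$.

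First I would encode the combinatorics in incidence matrices. Let $S$ and $T$ be the $V\times E$ matrices with $S_{v,e}=1$ iff $\source(e)=v$ and $T_{v,e}=1$ iff $\target(e)=v$, let $D=\mathrm{diag}(x_e)_{e\in E}$, and write $y_v=\sum_{\source(e)=v}x_e$ for the quantity appearing in \eqref{weightedtreeenum}. A direct check shows that the weighted out-degree Laplacian of $G$ (with edge $e$ weighted by $x_e$) is $L_G=Y-SDT^\top$, where $Y=\mathrm{diag}(y_v)$, while the weighted out-degree Laplacian of $\line G$ (with the edge $(e_1,e_2)$ of $\line G$ weighted by $x_{e_2}$, as $\kappa^{vertex}$ demands) is $L_{\line G}=\Delta-T^\top S D$, where $\Delta=\mathrm{diag}(y_{\target(e)})_{e\in E}$. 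Since the coefficient of $t$ in $\det(tI+M)$ equals the sum over $r$ of the principal minor of $M$ obtained by deleting row and column $r$, Tutte's directed Matrix--Tree Theorem gives $\kappa^{edge}(G,\xx)=[t^1]\det(tI_V+L_G)$ and $\kappa^{vertex}(\line G,\xx)=[t^1]\det(tI_E+L_{\line G})$. It therefore suffices to prove the polynomial identity $\det(tI_E+L_{\line G})=\prod_{v}(t+y_v)^{\indeg(v)-1}\,\det(tI_V+L_G)$.

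The heart of the argument is a row/column reduction of $tI_E+L_{\line G}$. Its $(e_1,e_2)$ entry is $(t+y_{\target(e_1)})$ on the diagonal minus $x_{e_2}$ when $\target(e_1)=\source(e_2)$, so it depends on $e_1$ only through $\target(e_1)$, except in the single diagonal slot; this holds verbatim in the presence of loops and multiple edges. Hence any two rows indexed by in-edges $e_1,e_1'$ of the same vertex $v$ satisfy $\mathrm{row}(e_1)-\mathrm{row}(e_1')=(t+y_v)(\mathbf 1_{e_1}-\mathbf 1_{e_1'})$. Using the no-sources hypothesis I may pick, for each $v$, a representative in-edge $f_v$ (here $\indeg(v)\ge1$ is exactly what is needed, and it also makes $\indeg(v)-1\ge0$); subtracting $\mathrm{row}(f_v)$ from every other in-edge row of $v$ and factoring out the common scalar $(t+y_v)$ produces the factor $\prod_v(t+y_v)^{\indeg(v)-1}$ and replaces each non-representative row by $\mathbf 1_e-\mathbf 1_{f_v}$.

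Finally I would collapse the remaining determinant. Adding, for each $v$, all non-representative in-edge columns of $v$ into the representative column $f_v$ turns every non-representative row into a single $+1$ in its own column; a Laplace expansion along these $|E|-|V|$ rows leaves a $V\times V$ determinant whose $(v,v')$ entry is $\sum_{\target(e)=v'}(tI_E+L_{\line G})_{f_v,e}=(t+y_v)\mathbf 1[v=v']-(SDT^\top)_{v,v'}$, that is, exactly $tI_V+L_G$. This yields the displayed polynomial identity, and extracting the coefficient of $t$ --- using that $\det(tI_V+L_G)$ has vanishing constant term because $L_G$ is singular, while $\prod_v(t+y_v)^{\indeg(v)-1}$ contributes its constant term $\prod_v y_v^{\indeg(v)-1}$ --- gives \eqref{weightedtreeenum}. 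I expect the main obstacle to be the bookkeeping: verifying the uniform row formula, and confirming that the column operations and the Laplace expansion introduce no stray sign, so that the Jacobian factor emerges as $\prod_v(\sum_{\source(e)=v}x_e)^{\indeg(v)-1}$ with no spurious powers of $(t+y_v)$ surviving.
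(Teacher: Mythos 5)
Your proof is correct, and it establishes exactly the polynomial identity that drives the paper's own argument, namely
\[
\det\bigl(t\cdot\Id - \Delta^{\line}\bigr) \;=\; \prod_{v\in V}\Bigl(t+y_v\Bigr)^{\indeg(v)-1}\,\det\bigl(t\cdot\Id-\Delta\bigr),
\qquad y_v=\sum_{\source(e)=v}x_e,
\]
but by a genuinely different and more elementary route. The paper writes $\Delta = AB-D$ and $\Delta^{\line}=BA-D^{\line}$ using the incidence matrices (your $T$ and $DS^\top$, up to transpose), derives the intertwining relation $A\Delta^{\line}=\Delta A$, and deduces that $\Delta^{\line}$ is block triangular with respect to $\ker(A)\oplus\ker(A)^{\perp}$: on $\ker(A)^{\perp}$ it has the same characteristic polynomial as $\Delta$ (this is where the no-sources hypothesis enters, via full rank of $A$), while on $\ker(A)$ it acts diagonally on the eigenbasis $\alpha_e=e-e_0(v)$ with eigenvalue $-y_v$. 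Your elimination is the concrete determinantal counterpart of this decomposition: your row-difference vectors $\mathbf{1}_e-\mathbf{1}_{f_v}$ are precisely the paper's eigenvectors $\alpha_e$, and the scalars $t+y_v$ you factor out are the corresponding eigenvalues of $t\cdot\Id-\Delta^{\line}$. What your route buys is self-containedness: you never need the orthogonal complement, the invertibility of $AA^{T}$, or a factorization of characteristic polynomials --- only row/column operations and a block-triangular determinant, with the hypothesis entering transparently as the existence of a representative in-edge $f_v$ for each vertex. What the paper's route buys is reusability: the same intertwining argument is run a second time with reduced incidence matrices to prove the rooted variant, Theorem~\ref{thm:rootedtreeenum}, where explicit elimination would be messier. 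The details you flagged as potential obstacles do check out: the uniform row formula holds verbatim in the presence of loops and multiple edges (the loop term $-x_{e_1}$ at the diagonal slot belongs to the part of the row depending only on $\target(e_1)$), and no stray sign appears, because after your row and column operations a simultaneous permutation of rows and columns (which preserves the determinant) puts the matrix in block lower triangular form with an identity block in the non-representative positions and the block $t\cdot\Id_V+L_G$ in the representative ones.
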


Note that since the vertex set of $\line G$ coincides with the edge set of $G$, both sides of (\ref{weightedtreeenum}) are polynomials in the same set of variables $\{x_e\}_{e\in E}$.
Setting all $x_e=1$ 
%in (\ref{weightedtreeenum}) 
yields the product formula
	\begin{equation}
	\label{outtothein} 
	\kappa(\line G) = \kappa(G) \prod_{v \in V} \outdeg(v)^{\iindeg(v)-1}
	\end{equation}
due in a slightly different form to Knuth \cite{Knuth67}.
Special cases of (\ref{outtothein}) include Cayley's formula $n^{n-1}$ for the number of rooted spanning trees of the complete graph $K_n$, as well as the formula $(m+n)m^{n-1}n^{m-1}$ for the number of rooted spanning trees of the complete bipartite graph $K_{m,n}$.  
These are respectively the cases that~$G$ has just one vertex with~$n$ loops, or~$G$ has just two vertices~$a$ and~$b$ with ~$m$ edges directed from~$a$ to~$b$ and~$n$ edges directed from~$b$ to~$a$.  
%%A number of other specializations are discussed in section~\ref{spanningtrees}.

Suppose now that $G$ is \emph{strongly connected}, that is, for any~$v,w \in V$ there are directed paths in~$G$ from~$v$ to~$w$ and from~$w$ to~$v$.  Then associated to any vertex~$v_*$ of~$G$ 
is an abelian group $K(G,v_*)$, the \emph{sandpile~group}, whose order is the number 
%$\kappa_v(G)$ 
of oriented spanning trees of $G$ rooted at~$v_*$.  Its definition and basic properties are reviewed in section~\ref{sandpilegroups}.  Other common names for this group are the critical group, Picard group, Jacobian, and group of components.  In the case when $G$ is \emph{Eulerian} (that is, $\indeg(v)=\outdeg(v)$ for all vertices $v$) the groups $K(G,v_*)$ and $K(G,v'_*)$ are isomorphic for any $v_*,v'_* \in V$, and we often denote the sandpile group just by $K(G)$.

When $G$ is Eulerian, we show that
there is a natural map from the sandpile group of $\line G$ to the sandpile group of $G$, 
%	\[ \bar{\phi}: \K(\line G) \to \K(G). \]
descending from the $\Z$-linear map
	\[ \phi : \Z^E \to \Z^V \]
which sends $e \mapsto \target(e)$.

Let $k$ be a positive integer.  We say that $G$ is \emph{balanced $k$-regular} if $\indeg(v)=\outdeg(v)=k$ for every vertex~$v$. 	

\begin{theorem}
\label{mainsequence}
Let $G=(V,E)$ be a strongly connected Eulerian directed graph, fix $e_* \in E$ and let $v_* = \target(e_*)$.  The map $\phi$ descends to a surjective group homomorphism 
	\[ \bar{\phi}: \K(\line G,e_*) \to \K(G,v_*).  \] 
Moreover, if $G$ is balanced $k$-regular, then $\ker(\bar{\phi})$ is the $k$-torsion subgroup of $\K(\line G,e_*)$.
\end{theorem}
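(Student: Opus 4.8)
The plan is to work directly with the Laplacians. Write $[v]$ for the standard basis vector of $\Z^V$ indexed by a vertex $v$ and $[e]$ for the basis vector of $\Z^E$ indexed by an edge $e$, and let $\Delta_G\colon\Z^V\to\Z^V$ and $\Delta_{\line G}\colon\Z^E\to\Z^E$ be the Laplacians, normalized so that $\Delta_G[v]=\outdeg(v)[v]-\sum_{\source(e)=v}[\target(e)]$ and likewise for $\line G$; the sandpile group is then the cokernel of the reduced Laplacian. In this language $\phi$ is the map $[e]\mapsto[\target(e)]$. The first thing I would record is the single identity on which everything rests: computing $\phi\,\Delta_{\line G}[e]$ and using $\outdeg_{\line G}(e)=\outdeg(\target(e))$ together with the fact that the out-neighbours of $e$ in $\line G$ are the edges $e'$ with $\source(e')=\target(e)$, one checks
\[ \phi\,\Delta_{\line G}=\Delta_G\,\phi. \]
This intertwining holds for every digraph. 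It shows $\phi(\mathrm{im}\,\Delta_{\line G})\subseteq\mathrm{im}\,\Delta_G$, and since $\phi([e_*])=[\target(e_*)]=[v_*]$, the map $\phi$ carries the defining relations of $\K(\line G,e_*)$ into those of $\K(G,v_*)$; here the hypothesis that $G$ is Eulerian is used to guarantee that $\mathrm{im}\,\Delta_G$ is already generated by the columns $\Delta_G[v]$ with $v\neq v_*$ (because $\sum_v\Delta_G[v]=0$), so the lone ``extra'' column $\Delta_G[v_*]$ causes no trouble. Thus $\bar\phi$ is well defined, and surjectivity is immediate: because $G$ has no sources every vertex is a target, so already $\phi\colon\Z^E\to\Z^V$ is onto, hence so is $\bar\phi$.

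For the ``moreover'' statement I would pass to the balanced $k$-regular case, where $\line G$ is again balanced $k$-regular, hence Eulerian, and both groups may be written as $\Z^E_0/\mathrm{im}\,\Delta_{\line G}$ and $\Z^V_0/\mathrm{im}\,\Delta_G$ (the subscript $0$ denoting coordinate sum zero). The crucial point is that regularity turns the Laplacians into near-multiples of the identity. Introducing the companion map $D\colon\Z^V\to\Z^E$, $[v]\mapsto\sum_{\source(e)=v}[e]$, a direct computation gives
\[ \Delta_{\line G}=k\,\Id-D\phi,\qquad \Delta_G=k\,\Id-\phi D, \]
and consequently $D\Delta_G=\Delta_{\line G}D$, so that $D$ also descends, to a map $\bar D\colon\K(G,v_*)\to\K(\line G,e_*)$. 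Reducing the displayed identities modulo the images of the Laplacians yields
\[ \bar D\,\bar\phi=k\cdot\Id,\qquad \bar\phi\,\bar D=k\cdot\Id \]
on $\K(\line G,e_*)$ and $\K(G,v_*)$ respectively. The first of these gives one inclusion at once: if $\bar\phi(z)=0$ then $kz=\bar D\bar\phi(z)=0$, so $\ker\bar\phi$ is contained in the $k$-torsion subgroup of $\K(\line G,e_*)$.

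The reverse inclusion is where the real work lies, since the abstract relations $\bar D\bar\phi=\bar\phi\bar D=k\,\Id$ alone do \emph{not} force $\ker\bar\phi$ to be all of the $k$-torsion. The extra ingredient I would exploit is that the sublattice $\mathrm{im}\,D\subseteq\Z^E$ is \emph{saturated}: since $(Dc)_e=c_{\source(e)}$ depends only on the source of $e$, the quotient $\Z^E/\mathrm{im}\,D\cong\bigoplus_{v}\Z^{k-1}$ is torsion-free, so $ku\in\mathrm{im}\,D$ forces $u\in\mathrm{im}\,D$. Given $x\in\Z^E_0$ representing a $k$-torsion class, I would write $kx=\Delta_{\line G}y=ky-D\phi y$, so that $D\phi y=k(y-x)$; saturation then lets me cancel the $k$ to get $y-x=Dc$ for a unique $c\in\Z^V$, whence $\phi y=kc$ by injectivity of $D$. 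Feeding this into the intertwining identity, $k\,\phi x=\phi\Delta_{\line G}y=\Delta_G\phi y=k\,\Delta_G c$, and cancelling $k$ in the torsion-free group $\Z^V$ gives $\phi x=\Delta_G c\in\mathrm{im}\,\Delta_G$, i.e.\ $\bar\phi(x)=0$. Hence every $k$-torsion class lies in $\ker\bar\phi$, completing the argument.

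I expect the main obstacle to be exactly this last step: establishing the operator identities $\Delta_{\line G}=k\,\Id-D\phi$ and $\Delta_G=k\,\Id-\phi D$ and, above all, recognising that the saturation of $\mathrm{im}\,D$ is precisely what allows the stray factor of $k$ to be divided away. As an independent check on the size of the kernel, the product formula~(\ref{outtothein}) gives $|\ker\bar\phi|=\kappa(\line G)/\kappa(G)=\prod_v\outdeg(v)^{\indeg(v)-1}=k^{(k-1)|V|}$, matching the rank $(k-1)|V|$ of $\Z^E/\mathrm{im}\,D$ that appears above; this hints at a counting proof of the reverse inclusion as a fallback, but the saturation argument is cleaner and sidesteps computing the order of the $k$-torsion subgroup outright.
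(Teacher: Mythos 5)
Your proof is correct, and its first half (well-definedness and surjectivity of $\bar\phi$) is the same argument as the paper's Lemma~\ref{phidescends}: your intertwining relation $\phi\,\Delta_{\line G}=\Delta_G\,\phi$ is the paper's computation $\phi(\Delta_e)=\Delta_{\target(e)}$, and the Eulerian hypothesis enters in both arguments only to put the column at $v_*$ back into the lattice of relations. Your second half follows the paper's skeleton---the paper also introduces your companion map $D$ (there called $\psi$, sending $v\mapsto\sum_{\source(e)=v}e$), and the paper's identity $(\psi\circ\phi)(e)=ke+\Delta_e$ is your $D\phi=k\,\Id-\Delta_{\line G}$---but you handle the hard direction genuinely differently. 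The paper proves (Lemma~\ref{multiplesofk}) that $\bar\psi\colon\K(G)\to\K(\line G)$ is \emph{injective} with image $k\,\K(\line G)$, via an explicit coefficient chase: writing $\psi(\eta)=\sum_e b_e\Delta_e+b_*e_*$, observing that $kb_f$ depends only on $\source(f)$, and extracting an integer-valued potential $F$ (which forces a separate treatment of $k=1$ and $k\geq 2$); injectivity then gives $\ker\bar\phi=\ker(\bar\psi\circ\bar\phi)=\ker(k\cdot\Id)$, i.e.\ both inclusions at once. You never prove injectivity of $\bar D$: you get $\ker\bar\phi\subseteq(k\mbox{-torsion})$ cheaply from $\bar D\bar\phi=k\,\Id$, and you prove the reverse inclusion directly from the saturation of $\mathrm{im}\,D$ in $\Z^E$ (torsion-freeness of $\Z^E/\mathrm{im}\,D\cong\bigoplus_v\Z^{k-1}$), dividing out the factor $k$ twice. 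This saturation is the same structural fact that powers the paper's ``depends only on $\source(f)$'' step, but your packaging is cleaner: no case split on $k$ and no coefficient bookkeeping. What the paper's route buys is the stronger standalone statement $\K(G)\simeq k\,\K(\line G)$, which is exactly what gets reused in the induction proving Theorem~\ref{deBruijn}; from your version it is recoverable only a posteriori, via $\K(G)\cong\K(\line G)/\ker\bar\phi\cong k\,\K(\line G)$ (the last isomorphism being multiplication by $k$).

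One slip, harmless to the proof but worth fixing: your closing sanity check is off by a factor of $k$. The order of $\ker\bar\phi$ is $\kappa(\line G,e_*)/\kappa(G,v_*)$, not $\kappa(\line G)/\kappa(G)$. Since $G$ and $\line G$ are both Eulerian, $\kappa(G)=\#V\cdot\kappa(G,v_*)$ while $\kappa(\line G)=k\,\#V\cdot\kappa(\line G,e_*)$, so in fact $|\ker\bar\phi|=k^{(k-1)\#V-1}$; this matches equation~(\ref{rootedproductformula}), which carries the extra factor $1/\outdeg(v_*)$. The discrepancy would matter if you pursued the counting fallback you sketch at the end, since the $k$-torsion subgroup would then need to be shown to have order $k^{(k-1)\#V-1}$ rather than $k^{(k-1)\#V}$.
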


This result extends to directed graphs some of the recent work of Berget, Manion, Maxwell, Potechin and Reiner~\cite{linegraphs} on undirected line graphs.  If $G=(V,E)$ is an undirected graph, the (undirected) line graph $\mbox{line}(G)$ of $G$ has vertex set $E$ and edge set
	\[ \{ \{e,e' \} \,|\, e,e' \in E, ~e \cap e' \neq \emptyset \}. \]
The results of~\cite{linegraphs} relate the sandpile groups of $G$ and $\mbox{line}(G)$. The undirected case is considerably more subtle, because although there is still a natural map $\K (\uline G) \to \K (G)$ when $G$ is regular, this map may fail to be surjective.

%Returning to directed line graphs, 
A particularly interesting family of directed line graphs are the \emph{de~Bruijn graphs} $DB_n$, defined recursively by
	\[ DB_n = \line (DB_{n-1}), \qquad n\geq 1, \]
where $DB_0$ is the graph with just one vertex and two loops.  The~$2^n$ vertices of $DB_n$ can be identified with binary words $b_1 \ldots b_n$ of length~$n$; two such sequences~$b$ and~$b'$ are joined by a directed edge $(b,b')$ if and only if $b'_i=b_{i+1}$ for all $i=1,\ldots,n-1$.
%A \emph{de Bruijn sequence} of order $n$ is a Hamiltonian tour of $DB_n$, or equivalently, an Eulerian tour of $DB_{n-1}$.  These are in bijection with oriented spanning trees of $DB_{n-1}$, so another special case of (\ref{outtothein}) is the famous formula~$2^{2^{n-1}}$ for the number of de~Bruijn sequences of order $n$.
%fix the root issue

Using Theorem~\ref{mainsequence}, we obtain the full structure of the sandpile groups of the de Bruijn graphs. 

\begin{theorem}
	\label{deBruijn}
	\[ \K(DB_n) = \bigoplus_{j=1}^{n-1} \,(\Z/2^j\Z)^{2^{n-1-j}}. \]
\end{theorem}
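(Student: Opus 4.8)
The plan is to run an induction on $n$ using the short exact sequence furnished by Theorem~\ref{mainsequence}. Each de~Bruijn graph $DB_{n-1}$ is strongly connected, Eulerian, and balanced $2$-regular, and $DB_n = \line(DB_{n-1})$, so the theorem (with $k=2$) supplies a surjection $\bar\phi\colon \K(DB_n) \to \K(DB_{n-1})$ whose kernel is the $2$-torsion subgroup $\K(DB_n)[2]$. Writing $A = \K(DB_n)$, this gives
\[ 0 \to A[2] \to A \xrightarrow{\bar\phi} \K(DB_{n-1}) \to 0, \]
so that $A/A[2] \cong \K(DB_{n-1})$. The base case $n=1$ is immediate: $DB_1$ has complexity $\kappa(DB_1)=2$ on $2$ vertices, so its sandpile group is trivial, matching the empty direct sum.

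First I would pin down the order of $A[2]$. Applying~(\ref{outtothein}) and using that every vertex of a balanced $2$-regular graph contributes $\outdeg(v)^{\indeg(v)-1}=2$, one finds $\kappa(DB_n) = \kappa(DB_{n-1}) \cdot 2^{2^{n-1}}$, hence $\kappa(DB_n) = 2^{2^n - 1}$. Dividing by the number $2^n$ of vertices (the rooted tree-counts agree for Eulerian graphs) gives $|A| = 2^{2^n - 1 - n}$, and therefore
\[ |A[2]| = \frac{|A|}{|\K(DB_{n-1})|} = 2^{2^{n-1} - 1}. \]
Since $A[2]$ is elementary abelian, this says precisely that $A$ has $r := 2^{n-1} - 1$ cyclic summands.

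The key algebraic observation is that a finite abelian $2$-group $A = \bigoplus_{i=1}^{r} \Z/2^{a_i}\Z$ is recovered from the pair $(A/A[2],\, r)$. Indeed, quotienting each factor by its order-$2$ subgroup gives $A/A[2] \cong \bigoplus_{a_i \geq 2} \Z/2^{a_i - 1}\Z$, so the isomorphism type of $A/A[2]$ determines the multiset $\{a_i : a_i \geq 2\}$ (shift each part of $A/A[2]$ up by one), while $r$ then fixes the number $r - \#\{i : a_i \geq 2\}$ of parts equal to $1$. I would isolate this as a short elementary lemma; it is the one place where the numerical input $|A[2]|$, and not merely the abstract quotient, is essential.

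Finally I would feed the inductive hypothesis $\K(DB_{n-1}) = \bigoplus_{j=1}^{n-2} (\Z/2^j\Z)^{2^{n-2-j}}$ into this reconstruction. Shifting every part up by one converts the summand $(\Z/2^j\Z)^{2^{n-2-j}}$ into $(\Z/2^{j+1}\Z)^{2^{n-1-(j+1)}}$, accounting for all parts $\geq 2$; the number of such parts is $\sum_{j=1}^{n-2} 2^{n-2-j} = 2^{n-2} - 1$, so the remaining $r - (2^{n-2} - 1) = 2^{n-2}$ summands are copies of $\Z/2\Z$, which is exactly the $j=1$ term $(\Z/2\Z)^{2^{n-2}}$. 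Reassembling yields $\bigoplus_{j=1}^{n-1}(\Z/2^j\Z)^{2^{n-1-j}}$, completing the induction. The main obstacle is precisely the reconstruction lemma: the exact sequence alone determines $A/A[2]$ but not $A$, and one must supply the count of cyclic summands from the tree-number computation to break the ambiguity.
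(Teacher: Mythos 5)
Your proof is correct and takes essentially the same route as the paper: induction on $n$, combining the inductively known structure of $\K(DB_{n-1})$ with the order count $\#\K(DB_n)=2^{2^n-n-1}$ to reconstruct $\K(DB_n)$ via the structure theorem for finite abelian $2$-groups. The only cosmetic difference is that you invoke Theorem~\ref{mainsequence} to realize $\K(DB_{n-1})$ as the quotient $A/A[2]$ and count cyclic summands via $|A[2]|$, whereas the paper applies Lemma~\ref{multiplesofk} directly to realize it as the subgroup $2A$ and pins down the number of $\Z/2\Z$ factors by solving for the total exponent --- equivalent bookkeeping, since $2A \cong A/A[2]$ for any finite abelian group $A$.
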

%	\[ \K(DB_n) = (\Z_2)^{2^{n-2}} \oplus (\Z_4)^{2^{n-3}} \oplus (\Z_8)^{2^{n-4}} \oplus \ldots \oplus \Z_{2^{n-1}}. \]

Closely related to the de Bruijn graphs are the \emph{Kautz graphs}, defined by
	\[ \Kautz_1 = (\{1,2,3\}, \{(1,2),(1,3),(2,1),(2,3),(3,1),(3,2)\}) \]
and 
	\[ \Kautz_n = \line (\Kautz_{n-1}), \qquad n \geq 2. \]
The Kautz graphs are useful in network design because they have close to the maximum possible number of vertices given their diameter and degree~\cite{FYM84} and because they contain many short vertex-disjoint paths between any pair of vertices~\cite{DLH93}.  The following result gives the sandpile group of $\Kautz_n$.

\begin{theorem}
	\label{Kautz}
	\[ \K(\mbox{\em Kautz}_n) = (\Z/3\Z) \oplus (\Z/2^{n-1}\Z)^2 \oplus \bigoplus_{j=1}^{n-2} (\Z/2^j\Z)^{3\cdot 2^{n-2-j}}. \]
\end{theorem}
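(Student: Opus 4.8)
The plan is to induct on $n$, using Theorem~\ref{mainsequence} to descend from $\Kautz_n = \line(\Kautz_{n-1})$ to $\Kautz_{n-1}$. First I would record that every $\Kautz_n$ is strongly connected, Eulerian, and balanced $2$-regular: this holds for $\Kautz_1$ (the bidirected triangle on three vertices), and each of these three properties is inherited under the line-graph operation. Writing $A_n := \K(\Kautz_n)$, Theorem~\ref{mainsequence} with $k=2$ then furnishes a surjection $\bar\phi : A_n \to A_{n-1}$ whose kernel is the $2$-torsion subgroup $A_n[2] := \{a \in A_n \,|\, 2a = 0\}$; equivalently $A_{n-1} \cong A_n / A_n[2]$. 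This by itself determines $A_{n-1}$ from $A_n$, so the real task is to invert the recursion.

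The key observation that makes the inversion possible is that, for any finite abelian group $A$, the pair consisting of the quotient $A/A[2]$ and the order $|A|$ determines $A$ up to isomorphism. Indeed, $A[2]$ lies entirely in the $2$-primary part, so $A/A[2]$ already displays the full odd part of $A$; and if the $2$-part of $A$ is $\bigoplus_i \Z/2^{c_i}\Z$, then $A/A[2] = \bigoplus_i \Z/2^{c_i-1}\Z$, which recovers every exponent $c_i \geq 2$, while the number of indices with $c_i = 1$ is pinned down by $|A[2]| = |A|/|A/A[2]|$. Thus, once $A_{n-1}$ is known by induction, it will suffice to compute $|A_n|$.

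To compute the orders I would combine the product formula (\ref{outtothein}) with the identity $|\K(G)| = \kappa(G)/|V|$, valid for Eulerian $G$ since all roots then give isomorphic sandpile groups. As $\Kautz_{n-1}$ is balanced $2$-regular on $N_{n-1} = 3\cdot 2^{n-2}$ vertices, (\ref{outtothein}) gives $\kappa(\Kautz_n) = \kappa(\Kautz_{n-1})\cdot 2^{N_{n-1}}$, while $|V(\Kautz_n)| = |E(\Kautz_{n-1})| = 2N_{n-1}$. Dividing, I obtain $|A_n|/|A_{n-1}| = 2^{N_{n-1}-1}$, so $|A_n[2]| = 2^{N_{n-1}-1}$ and the $2$-rank of $A_n$ (its number of cyclic $2$-summands) equals $3\cdot 2^{n-2} - 1$.

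The base case is $\K(\Kautz_1) = \Z/3\Z$, read off from the matrix-tree theorem applied to the bidirected triangle. For the inductive step I would assume the stated formula for $A_{n-1}$ and apply the reconstruction of the second paragraph: lifting each cyclic $2$-summand $\Z/2^j\Z$ of $A_{n-1}$ to $\Z/2^{j+1}\Z$, leaving the odd part $\Z/3\Z$ untouched, and adjoining exactly $3\cdot 2^{n-3}$ copies of $\Z/2\Z$ to reach $2$-rank $3\cdot 2^{n-2}-1$, produces a group whose quotient by its $2$-torsion is $A_{n-1}$ and whose order is correct; by the uniqueness just established it must be $A_n$. A short exponent bookkeeping then confirms this group is precisely $(\Z/3\Z)\oplus(\Z/2^{n-1}\Z)^2\oplus\bigoplus_{j=1}^{n-2}(\Z/2^j\Z)^{3\cdot 2^{n-2-j}}$. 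I expect the main obstacle to be essentially organizational rather than conceptual: verifying that the three regularity hypotheses are inherited under $\line$, handling the degenerate small-$n$ indices in the sums, and carrying out the order and rank computations without slips. The structural content is supplied entirely by Theorem~\ref{mainsequence} and the reconstruction observation, so that no genuinely new difficulty beyond the de~Bruijn case of Theorem~\ref{deBruijn} should arise — the only substantive change being the nontrivial odd part $\Z/3\Z$ carried along from the base graph $\Kautz_1$.
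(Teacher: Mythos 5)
Your proposal is correct and follows essentially the same route as the paper: the paper proves Theorem~\ref{Kautz} by carrying out, for a general balanced $p$-regular graph, exactly the induction you describe, using Lemma~\ref{multiplesofk} in the form $\K(\Kautz_{n-1}) \cong 2\K(\Kautz_n)$ (equivalent to your $\K(\Kautz_n)/\K(\Kautz_n)[2]$, since $2A \cong A/A[2]$ for finite abelian $A$), together with the order count coming from (\ref{outtothein}) to pin down the number of $\Z/2\Z$ summands, with the odd part $\Z/3\Z$ carried along from $\Kautz_1$. The only cosmetic differences are that you invoke the kernel statement of Theorem~\ref{mainsequence} rather than Lemma~\ref{multiplesofk}, and you run the induction directly on the Kautz family instead of stating the general $p$-regular formula and specializing.
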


The remainder of the paper is organized as follows.  In section~\ref{spanningtrees}, we prove Theorem~\ref{thm:weightedtreeenum} and state a variant enumerating spanning trees with a fixed root.  Section~\ref{sandpilegroups} begins by defining the sandpile group, and moves on from there to the proof of Theorem~\ref{mainsequence}.  
In section~\ref{iteratedlinegraphs} we 
%extend (\ref{outtothein}) to 
enumerate spanning trees of iterated line digraphs.  Huaxiao, Fuji and Qiongxiang \cite{iterated} prove that for a balanced $k$-regular directed graph~$G$ on~$N$ vertices,
	\[ \kappa(\line^n G) = \kappa(G) k^{(k^n-1)N}. \]
Theorem~\ref{iterated} generalizes this formula to an arbitrary directed graph~$G$ having no sources.  This section also contains the proofs of Theorems~\ref{deBruijn} and~\ref{Kautz}.  Lastly, in section~\ref{concluding} we pose two questions for future study.

\section{Spanning Trees}
\label{spanningtrees}

Let $G=(V,E)$ be a finite directed graph, loops and multiple edges allowed.  We denote its vertices by $v,w,\ldots$ and edges by $e,f,\ldots$.  Each edge $e \in E$ is directed from its source $\source(e)$ to its target $\target(e)$.    In this section we prove Theorem~\ref{thm:weightedtreeenum} relating the spanning trees of~$G$ and~$\line G$, and discuss some interesting special cases. 

If~$k$ is a field, we write~$k^V$ and~$k^E$ for the $k$-vector spaces with bases indexed by~$V$ and~$E$ respectively.  We think of the elements of~$k^V$ or~$k^E$ as formal $k$-linear combinations of vertices or of edges.

%Consider the fields of rational functions $\Q(\xx) = \Q((x_e)_{e\in E})$ and $\Q(\yy) = \Q((y_v)_{v \in V})$.  
Consider the field of rational functions $\Q(\xx) = \Q((x_e)_{e\in E}, (x_v)_{v \in V})$.  
The \emph{edge-weighted Laplacian} and \emph{vertex-weighted Laplacian} of $G$ are the $\Q(\xx)$-linear transformations 
	\[	\Delta^{edge}, \Delta^{vertex} : \Q(\xx)^V \to \Q(\xx)^V \]
%
%	\[ \begin{split} 
%		\Delta^{edge} : \Q(\xx)^V \to \Q(\xx)^V \\
%	 	\Delta^{vertex}: \Q(\yy)^V \to \Q(\yy)^V 
%	\end{split} \]
sending 
	\[ \begin{split} 
		\Delta^{edge}(v) &= \sum_{\ssource(e)=v} x_e (\target(e) - v); \\
	 	\Delta^{vertex}(v) &= \sum_{\ssource(e)=v} x_{\ttarget(e)} (\target(e) - v).
	\end{split} \]
The sums are over all edges~$e\in E$ such that $\source(e)=v$.

We will use the following form of the matrix-tree theorem for directed graphs.  Here $[t]\,p(t)$ denotes the coefficient of $t$ in the polynomial $p(t)$.

\begin{theorem}[Matrix-Tree Theorem]
%Let $\chi^{edge}(t)$ and $\chi^{vertex}(t)$ be the characteristic polynomials of $\Delta^{edge}$ and $\Delta^{vertex}$.  Then
%	\[ \kappa^{edge}(G,\xx) = [t] \chi^{edge}(t). \]
%	\[ \kappa^{vertex}(G,\xx) = [t] \chi^{vertex}(t). \]
	\[ \kappa^{edge}(G,\xx) = [t] \det(t \cdot \mbox{\em Id} - \Delta^{edge}). \]
	\[ \kappa^{vertex}(G,\xx) = [t] \det(t \cdot \mbox{\em Id} - \Delta^{vertex}). \]
\end{theorem}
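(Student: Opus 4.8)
The plan is to read off the coefficient of $t$ from the characteristic polynomial, match each summand with the oriented spanning trees rooted at one fixed vertex, and then sum over all possible roots. I will run the argument for $\Delta^{edge}$; the $\Delta^{vertex}$ statement follows from the identical computation after replacing the edge weight $x_e$ by $x_{\target(e)}$, since both $\{x_e\}_{e\in E}$ and $\{x_{\target(e)}\}_{e \in E}$ are merely weight functions on the edge set of $G$.

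First I would expand the characteristic polynomial in principal minors:
\[ \det(t\cdot\Id - \Delta^{edge}) = \sum_{S\subseteq V}(-1)^{|S|}\det\left(\Delta^{edge}_S\right)t^{\,|V|-|S|}, \]
where $\Delta^{edge}_S$ is the principal submatrix of $\Delta^{edge}$ on the rows and columns indexed by $S$. Only the sets $S=V\setminus\{v_*\}$ contribute to the coefficient of $t$, so
\[ [t]\det(t\cdot\Id - \Delta^{edge}) = (-1)^{|V|-1}\sum_{v_*\in V}\det\left(\Delta^{edge}_{V\setminus\{v_*\}}\right). \]
The constant term vanishes, as it must: each column of $\Delta^{edge}$ sums to zero, so $\Delta^{edge}$ is singular and $0$ is a root.

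The crux is then the rooted, weighted identity
\[ (-1)^{|V|-1}\det\left(\Delta^{edge}_{V\setminus\{v_*\}}\right) = \sum_{T}\prod_{e\in T}x_e, \]
the sum ranging over oriented spanning trees $T$ rooted at $v_*$. To prove it I would expand the determinant over permutations of $V\setminus\{v_*\}$ and split each diagonal entry $-\sum_{\source(e)=v}x_e$ into its constituent non-loop out-edges. Each resulting monomial then encodes a choice of one out-edge at every non-root vertex, i.e. a functional graph on $V\setminus\{v_*\}$; a sign-reversing involution on those configurations that contain a directed cycle cancels all terms except the ones whose selected edges form an arborescence directed toward $v_*$. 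Matching the sign of each surviving permutation term against the negative diagonal factors is what produces the global $(-1)^{|V|-1}$ together with the positive monomial $\prod_{e\in T}x_e$. Equivalently, one may argue by deletion--contraction on the edges of $G$, or simply invoke the standard weighted form of the directed matrix-tree theorem.

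Finally, summing the rooted identity over $v_*\in V$ counts each oriented spanning tree exactly once, since every tree has a unique root, and yields
\[ [t]\det(t\cdot\Id - \Delta^{edge}) = \sum_{v_*\in V}\;\sum_{T\text{ rooted at }v_*}\prod_{e\in T}x_e = \kappa^{edge}(G,\xx), \]
which is the first assertion; the second follows verbatim with $x_e$ replaced by $x_{\target(e)}$. I expect the sign-reversing involution — correctly aligning the sign of each permutation term with the cycle structure of the selected out-edges — to be the main obstacle, whereas the coefficient extraction and the summation over roots are routine.
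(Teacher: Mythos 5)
Your proposal is correct, but it cannot be "the same as the paper's proof" for a simple reason: the paper does not prove this statement at all. It quotes it as known, citing \cite{CL96} for the vertex-weighted version and \cite{Chaiken82} for the edge-weighted version. What your argument actually does is derive the unrooted, coefficient-of-$t$ form from the \emph{rooted} form: you expand $\det(t\cdot\Id-\Delta^{edge})$ in principal minors, observe that only the minors omitting a single vertex $v_*$ contribute to $[t]$, identify $(-1)^{|V|-1}\det\bigl(\Delta^{edge}_{V\setminus\{v_*\}}\bigr)=\det\bigl(-\Delta^{edge}_0\bigr)$ with the rooted enumerator $\kappa^{edge}(G,v_*,\xx)$, and sum over $v_*$, using the fact that every oriented spanning tree has a unique root. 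That rooted identity is exactly the paper's Theorem~\ref{thm:rootedmatrixtree}, which the paper also states and cites without proof; so your route shows that only one external black box is needed rather than two --- the coefficient-of-$t$ statement is pure linear algebra on top of the rooted Matrix-Tree Theorem. This is a clean and complete reduction, and your fallback of "simply invoking the standard weighted directed matrix-tree theorem" for the crux makes the whole argument rigorous even though your sketch of the sign-reversing involution (cancelling out-edge configurations that contain a directed cycle) is only an outline; that sketch is the standard Chaiken-style combinatorial proof and can be completed, but as written it is the one step that is not yet a proof. Two harmless details: the diagonal entry of $\Delta^{edge}$ at $v$ is $-\sum x_e$ taken over \emph{non-loop} out-edges only, since a loop contributes $x_e(\target(e)-v)=0$ (consistent with the fact that loops never occur in an oriented spanning tree); and your remark that the \emph{columns} of $\Delta^{edge}$ sum to zero is the transpose of the convention used elsewhere in the paper ("rows sum to zero"), which is immaterial since a matrix and its transpose have the same characteristic polynomial and the same principal minors.
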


For a proof, see for example \cite[Theorem~2]{CL96} for the vertex weighted-version, and \cite{Chaiken82} for the edge-weighted version.

\begin{proof}[Proof of Theorem~\ref{thm:weightedtreeenum}]
Consider the $V\times E$ matrix
	\[ A_{ve} = \begin{cases} 1, & v=\target(e) \\ 0, & \mbox{else}. \end{cases} \]
and the $E\times V$ matrix
	\[ B_{ev} = \begin{cases} x_e, & v=\source(e) \\ 0, & \mbox{else}. \end{cases} \]
% putting the weights in A instead of B does not work.

Let $\Delta$ be the edge-weighted Laplacian of $G$, and let $\Delta^\line$ be the vertex-weighted Laplacian of $\line G$.  Then
	\[  \Delta = AB - D \]
and
	\begin{equation} \label{biglaplacian} \Delta^{\line} = BA - D^\line \end{equation}
where $D$ and $D^\line$ are the diagonal matrices with diagonal entries
	\[ D_{vv} = \sum_{\ssource(f)=v} x_f, \qquad v \in V \]
and
	\[ D^{\line}_{ee} = \sum_{\ssource(f)=\ttarget(e)} x_f, \qquad e \in E. \]

Since $AD^\line = DA$, we have
	\begin{equation} \label{intertwining} A\Delta^\line = A(BA-D^\line) = ABA - DA = (AB-D)A = \Delta A. \end{equation}
In particular, $\Delta^\line(\ker(A)) \subset \ker(A)$, so the vector space decomposition
	\[ \Q(\xx)^E = \ker(A) \oplus \ker(A)^\perp \]
%so $\Delta^\line$ has the block triangular form
%	\[ \Delta^\line = S \left( \begin{array}{c|c} P & 0 \\ \hline Q & R \end{array} \right) S^{-1} \]
%where $P = \Delta^{\line}|_{\ker(A)^\perp}$.  
exhibits $\Delta^\line$ in block triangular form.  Hence the characteristic polynomial $\chi(t)$ of $\Delta^\line$ factors as
	\[ \chi(t) = \chi_1(t) \chi_2(t) \]
where $\chi_1$ and $\chi_2$ are respectively the characteristic polynomials of $\Delta^\line|_{\ker(A)}$ and $\Delta^\line|_{\ker(A)^\perp}$.  

By hypothesis, $G$ has no sources, so $A$ has full rank.  In particular, $AA^T$ is invertible.  Hence the restriction $A|_{\ker(A)^\perp}$ is an isomorphism of $\ker(A)^\perp = \Im(A^T)$ onto $\Q(\xx)^V$.  By (\ref{intertwining}) it follows that $\Delta^\line|_{\ker(A)^\perp}$ and~$\Delta$ have the same characteristic polynomial
	\[ \chi_2(t) = \det(t\cdot\Id - \Delta). \]
Since the rows of $\Delta$ sum to zero, $\chi_2(t)$ has no constant term.  By the matrix-tree theorem,
	\begin{equation*} \label{coefficientoft} \begin{split} \kappa^{vertex}(\line G,\xx) = [t]\chi(t)
		&= \chi_1(0) \cdot [t] \chi_2(t) \\
		&= \det \left(-\Delta^\line|_{\ker(A)}\right) \cdot \kappa^{edge}(G,\xx).
	\end{split} \end{equation*}

It remains to find the determinant of $-\Delta^\line|_{\ker(A)}$.  For each vertex $v \in V$, fix an edge $e_0(v)$ with $\target(e_0(v))=v$.  Then a basis for $\ker(A)$ is given by the vectors
	\[ \alpha_e = e - e_0(v), \qquad v\in V,~e \in E,~\target(e)=v,~e\neq e_0(v). \]
By (\ref{biglaplacian}) we have
	\[ \Delta^\line \alpha_e = -\left(\sum_{\ssource(f)=\ttarget(e)} x_f \right) \alpha_e \]
so the vectors $\alpha_e$ form an eigenbasis for $\Delta^\line|_{\ker(A)}$.  As each eigenvalue $-\sum_{\ssource(f)=v} x_f$ occurs with multiplicity $\indeg(v)-1$, we conclude that
	\[ \det \left(-\Delta^\line|_{\ker(A)}\right) = \prod_{v \in V} \left( \sum_{\ssource(f)=v} x_f \right)^{\iindeg(v)-1}. \qed \]
\renewcommand{\qedsymbol}{}
\end{proof}

We remark that the idea of using the incidence matrices $A$ and $B$ to relate the adjacency matrices of $G$ and $\line G$ has appeared before.  See, for example, Yan and Zhang \cite[Proposition~1.4]{YZ03}, who in turn cite Lin and Zhang \cite{LZ83} and Liu \cite{Liu96}.

Theorem~\ref{thm:weightedtreeenum} enumerates all oriented spanning trees of $\line G$, while in many applications one wants to enumerate spanning trees with a fixed root.  Given a vertex $v_* \in V$, let	
	\[ \kappa^{edge}(G,v_*,\xx) = \sum_{\root(T)=v_*} \, \prod_{e \in T} x_e \]
and
	\[ \kappa^{vertex}(G,v_*,\xx) = \sum_{\root(T)=v_*} \, \prod_{e \in T} x_{\target(e)}. 
	%= \sum_{\root(T)=v_*} \prod_{v \in V} x_v^{\iindeg_T(v)}.
	\]
We will use the following variant of the matrix-tree theorem; see \cite{Chaiken82} and \cite[Theorem~5.6.4]{Stanley}.

\begin{theorem}[Matrix-Tree Theorem, rooted version]
\label{thm:rootedmatrixtree}
Let $\Delta^{edge}_0$ and $\Delta^{vertex}_0$ be the submatrices of $\Delta^{edge}$ and $\Delta^{vertex}$ omitting row and column~$v_*$.  Then
	\[ \kappa^{edge}(G,v_*,\xx) = \det (-\Delta^{edge}_0). \]
	\[ \kappa^{vertex}(G,v_*,\xx) = \det (-\Delta^{vertex}_0). \]
\end{theorem}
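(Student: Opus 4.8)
The plan is to prove both identities at once from the multilinear (column) expansion of the determinant, matching the surviving terms with oriented spanning trees by a standard cancellation argument; this is the classical directed matrix-tree theorem, adapted here to the two weighted Laplacians defined above.

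First I would record the shape of the relevant columns. Deleting row and column $v_*$ and writing vectors in $\Q(\xx)^{V\setminus\{v_*\}}$ under the convention that the basis vector $v_*$ is set to $0$, the column of $-\Delta^{edge}_0$ indexed by a non-root vertex $v$ is
\[ \sum_{\source(e)=v} x_e\,(v - \target(e)), \]
where loops contribute $x_e(v-v)=0$ and may be discarded. Expanding $\det(-\Delta^{edge}_0)$ by multilinearity in the columns then gives
\[ \det(-\Delta^{edge}_0) = \sum_{(e_v)} \Big(\prod_{v\neq v_*} x_{e_v}\Big)\,\det\big[\,v-\target(e_v)\,\big], \]
the outer sum ranging over all ways of selecting one outgoing edge $e_v$ at each non-root vertex $v$, and $\det\big[\,v-\target(e_v)\,\big]$ denoting the determinant of the matrix whose $v$-th column is $v-\target(e_v)$. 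Each selection determines a subgraph of $G$ in which every non-root vertex has out-degree $1$.

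The heart of the proof is the claim that $\det\big[\,v-\target(e_v)\,\big]$ is $1$ when the selected edges form an oriented spanning tree rooted at $v_*$ and $0$ otherwise. If the selection contains a directed cycle $v_1\to\cdots\to v_m\to v_1$ (necessarily avoiding $v_*$, which has no selected out-edge), then the corresponding columns sum to $\sum_i (v_i-v_{i+1})=0$, so the determinant vanishes. If the selection is acyclic, then following the chosen edges from any non-root vertex cannot be trapped in a cycle and must reach $v_*$, so the selection is exactly an oriented spanning tree rooted at $v_*$; ordering the vertices by any linear extension of $v\prec\target(e_v)$ makes the matrix lower triangular with $1$'s on the diagonal, giving determinant $1$. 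Summing the surviving terms yields $\sum_{\root(T)=v_*}\prod_{e\in T}x_e = \kappa^{edge}(G,v_*,\xx)$.

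The vertex-weighted identity needs no new idea: the column of $-\Delta^{vertex}_0$ at $v$ is $\sum_{\source(e)=v} x_{\target(e)}(v-\target(e))$, so the same expansion and the same cancellation produce $\sum_{\root(T)=v_*}\prod_{e\in T} x_{\target(e)}=\kappa^{vertex}(G,v_*,\xx)$. I expect the only real obstacle to be the bookkeeping in the cancellation step—checking that every cyclic selection contributes $0$ and every tree contributes exactly $+1$ with the correct sign—rather than anything conceptually deep; once that lemma is established, both formulas are immediate.
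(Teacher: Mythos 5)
Your proof is correct, but there is nothing in the paper to compare it against: the paper does not prove this statement at all. It invokes the rooted matrix-tree theorem as a known classical result, citing Chaiken~\cite{Chaiken82} and Stanley~\cite[Theorem~5.6.4]{Stanley}, and then uses it as a black box (in the proof of Theorem~\ref{thm:rootedtreeenum} and to identify $\#\K(G,v_*)$ with $\kappa(G,v_*)$ in section~\ref{sandpilegroups}). What you have written is essentially the standard proof that lives in those references: expand $\det(-\Delta^{edge}_0)$ by multilinearity in the columns, so that each term corresponds to a choice of one non-loop out-edge $e_v$ at each vertex $v \neq v_*$; a choice containing a directed cycle contributes $0$ because the columns indexed by the cycle vertices sum to zero (your parenthetical remark that such a cycle necessarily avoids $v_*$, since every vertex on it is the source of a selected edge, is exactly what makes deleting the row and column $v_*$ harmless); and an acyclic choice is precisely an oriented spanning tree rooted at $v_*$ under the paper's definition of that term, contributing determinant $+1$ via the children-before-parents triangular ordering. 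Since the weights $x_{e_v}$ (resp.\ $x_{\target(e_v)}$) factor out of each term before the determinant is evaluated, the edge-weighted and vertex-weighted statements follow from the identical combinatorial lemma, which gives your treatment a uniformity the paper's citations do not supply. One terminological quibble: what you call a ``standard cancellation argument'' involves no cancellation at all --- the cyclic terms vanish individually because their columns are linearly dependent, and no sign-reversing involution is needed --- so the bookkeeping you flag at the end as the ``only real obstacle'' is in fact already complete in your second and third paragraphs.
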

	
The following variant of Theorem~\ref{thm:weightedtreeenum} enumerates spanning trees of~$\line G$ with a fixed root $e_*$ in terms of spanning trees of~$G$ with root $w_*=\source(e_*)$.

\begin{theorem}
\label{thm:rootedtreeenum}
Let $G=(V,E)$ be a finite directed graph, and let $e_*=(w_*,v_*)$ be an edge of~$G$. If $\mbox{\em indeg}(v) \geq 1$ for all vertices $v \in V$, and $\mbox{\em indeg}(v_*) \geq 2$, then
%	\begin{equation*} \begin{split} \kappa^{vertex}(\line G,e_*,\xx) = \kappa^{edge}(G,w_*,\xx) \, x_{e_*} \left( \sum_{\ssource(e)=v_*} x_e \right)^{\mbox{\em \scriptsize indeg}(v_*)-2} \times \\
%	\times \prod_{v \neq v_*} \left( \sum_{\ssource(e)=v} x_e \right)^{\mbox{\em \scriptsize indeg}(v)-1}. \end{split} \end{equation*}	
	\begin{equation*} \frac{\kappa^{vertex}(\line G,e_*,\xx)}{x_{e_*} \kappa^{edge}(G,w_*,\xx)} =  \left( \sum_{\ssource(e)=v_*} x_e \right)^{\mbox{\em \scriptsize indeg}(v_*)-2}
	\prod_{v \neq v_*} \left( \sum_{\ssource(e)=v} x_e \right)^{\mbox{\em \scriptsize indeg}(v)-1}. \end{equation*}
\end{theorem}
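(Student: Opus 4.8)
The plan is to apply the rooted matrix-tree theorem (Theorem~\ref{thm:rootedmatrixtree}) and then re-run the block-triangularization from the proof of Theorem~\ref{thm:weightedtreeenum}, being careful about the single deleted coordinate. Keeping the notation $\Delta = \Delta^{edge}$ and $\Delta^{\line}$ for the vertex-weighted Laplacian of $\line G$, Theorem~\ref{thm:rootedmatrixtree} gives $\kappa^{vertex}(\line G, e_*, \xx) = \det(-\Delta^{\line}_0)$, the principal minor omitting row and column $e_*$; equivalently this is the determinant of the compressed operator $q\circ\Delta^{\line}|_U$, where $U = \mathrm{span}\{f : f\neq e_*\}$ and $q$ is the projection killing the $e_*$-coordinate. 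I would evaluate this determinant in a basis of $U$ adapted to $\ker A$. The one free choice I exploit is the system of representatives $e_0(v)$ used to build the eigenbasis $\alpha_e = e - e_0(\ttarget(e))$ of $\ker A$: since $\iindeg(v_*)\geq 2$ there is an edge into $v_*$ other than $e_*$, so I may insist that every $e_0(v)\neq e_*$. With this choice each $\alpha_e$ with $e\neq e_*$, as well as each $e_0(v)$, lies in $U$, and these $|E|-1$ vectors form a basis of $U$, namely the basis $\mathcal B$ from the earlier proof with the single vector $\alpha_{e_*}$ removed.

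First I would verify that $q\circ\Delta^{\line}|_U$ is block triangular in this basis. The subspace $U_1 = U\cap\ker A = \mathrm{span}\{\alpha_e : e\neq e_*\}$ is invariant, and by the eigenvector computation $\Delta^{\line}\alpha_e = -(\sum_{\ssource(f)=\ttarget(e)}x_f)\alpha_e$ the operator acts diagonally on it. Grouping these eigenvalues by target vertex, the diagonal block contributes $(\sum_{\ssource(e)=v_*}x_e)^{\iindeg(v_*)-2}\prod_{v\neq v_*}(\sum_{\ssource(e)=v}x_e)^{\iindeg(v)-1}$, the deleted edge $e_*$ into $v_*$ being exactly what lowers the exponent at $v_*$ from $\iindeg(v_*)-1$ to $\iindeg(v_*)-2$ and producing the product on the right-hand side. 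It then remains to identify the complementary block $Q$, the operator induced on $U/U_1 \cong \Q(\xx)^V$ via $e_0(v)\mapsto v$. Starting from $\Delta^{\line} e_0(v) = \sum_{\ssource(f)=v}x_f f - (\sum_{\ssource(f)=v}x_f)e_0(v)$, applying $q$, and reducing modulo $U_1$ via $f\equiv e_0(\ttarget(f))$, one finds that $Q$ agrees with $\Delta$ except that the single entry in row $v_*$, column $w_*$ is decreased by $x_{e_*}$; that is, $Q$ is $\Delta$ with the contribution of $e_*$ deleted from column $w_*$.

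The main obstacle is the final step: showing $\det(-Q) = x_{e_*}\,\kappa^{edge}(G,w_*,\xx)$. I would expand by multilinearity in column $w_*$; since $\det\Delta = 0$, only the term linear in the deleted entry survives, giving $\det(-Q) = \pm\, x_{e_*}$ times the $(v_*,w_*)$-cofactor of $\Delta$, obtained by deleting row $v_*$ and column $w_*$ rather than the diagonal minor that computes $\kappa^{edge}(G,w_*,\xx)$. The trick for converting one into the other is that the all-ones vector lies in the left kernel of $\Delta$ (its columns sum to zero), so $\mathrm{adj}(\Delta)\,\Delta = 0$ forces every row of $\mathrm{adj}(\Delta)$ to be a multiple of $(1,\dots,1)$; equivalently, the signed cofactors along any fixed column of $\Delta$ are all equal. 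Taking that column to be $w_*$ identifies the $(v_*,w_*)$-cofactor with the diagonal minor $\det(-\Delta_0) = \kappa^{edge}(G,w_*,\xx)$, and after tracking signs this yields $\det(-Q) = x_{e_*}\,\kappa^{edge}(G,w_*,\xx)$. Multiplying the two block determinants and dividing gives the stated identity. The only remaining points are the sign bookkeeping in the cofactor step and the verification that the change of basis from the standard basis of $U$ to $\mathcal B\setminus\{\alpha_{e_*}\}$ is unimodular, both of which are routine.
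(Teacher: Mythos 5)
Your proposal is correct, and it follows the paper's proof in outline: both start from Theorem~\ref{thm:rootedmatrixtree}, block-triangularize the truncated vertex-weighted Laplacian of $\line G$ with respect to the kernel of the incidence map, read off the kernel block from the eigenvectors $\alpha_e$ (your exponent count at $v_*$, lowered to $\iindeg(v_*)-2$ by the exclusion of $e_*$, is exactly the paper's), and reduce to evaluating the determinant of the same complementary operator --- your $Q$ is precisely the paper's $A_0B_0-D$, where $A_0,B_0$ are the incidence matrices with the $e_*$ coordinate deleted. The differences are in execution. For the block structure, the paper uses the intertwining $A_0\Delta_0^{\line}=(A_0B_0-D)A_0$ together with the decomposition $\ker(A_0)\oplus\ker(A_0)^\perp$ and full rank of $A_0$; you instead exhibit an explicit basis of $U$ adapted to $\ker A$ (using $\iindeg(v_*)\geq 2$ to arrange $e_0(v_*)\neq e_*$, exactly where the paper uses that hypothesis to ensure $G\setminus e_*$ has no sources) and compute the induced quotient operator by hand. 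The genuine divergence is the last step. The paper peels $x_{e_*}$ off the \emph{diagonal} entry, writing $D-A_0B_0=-\Delta_{G\setminus e_*}+M$ with $M$ the matrix whose single nonzero entry $x_{e_*}$ sits in row and column $w_*$; the column-$w_*$ expansion then yields $x_{e_*}\det(-\Delta_0)$ directly, the other term vanishing as the determinant of a full Laplacian. You peel $x_{e_*}$ off the \emph{off-diagonal} entry $(v_*,w_*)$, comparing with $\Delta$ itself; the expansion then produces the off-diagonal $(v_*,w_*)$-cofactor, and you need the additional (standard, and correctly argued) adjugate lemma --- all cofactors along a fixed column of a matrix with zero column sums coincide --- to convert it into the principal minor $\kappa^{edge}(G,w_*,\xx)$. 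Both finishes are sound; the paper's choice of reference Laplacian makes the adjugate lemma unnecessary, while yours avoids introducing the graph $G\setminus e_*$ altogether. Two minor remarks: the unimodularity check you defer is not needed, since the determinant of a linear operator on $U$ is basis-independent; and if you run the cofactor argument with $-\Delta$ and $-Q$ throughout (the columns of $-\Delta$ also sum to zero), the deferred sign bookkeeping disappears entirely.
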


\begin{proof}
The proof is analogous to that of Theorem~\ref{thm:weightedtreeenum}, except that it uses reduced incidence matrices
	\[ A_0 : \Q(\xx)^{E-\{e_*\}} \to \Q(\xx)^V \]
and
	\[ B_0 : \Q(\xx)^V \to \Q(\xx)^{E-\{e_*\}}. \]
%
% Note: important to use the full vertex set, not V-\{v_*\}.
%
The edge-weighted Laplacian of the graph $G\setminus e_* = (V,E-\{e_*\})$ is given by
	\[ \Delta_{G\setminus e_*} = A_0 B_0 - D + M \]
where the matrix~$M$ has a single nonzero entry~$x_{e_*}$ in row and column~$w_*$.  Expanding $\det(D - A_0 B_0)$ along column~$w_*$ we find
	\[ \det (D - A_0 B_0) = \det(-\Delta_{G\setminus e}) + x_{e_*} \det(-\Delta_0) \]
where $\Delta_0$ is the submatrix of the edge-weighted Laplacian of~$G$ omitting the row and column~$w_*$.  By Theorem~\ref{thm:rootedmatrixtree} we have $\det (-\Delta_0) = \kappa^{edge}(G,w_*,\xx)$.  Since the rows of $\Delta_{G\setminus e_*}$ sum to zero, it follows that
	\[ \det (D - A_0 B_0) = x_{e_*} \kappa^{edge}(G,w_*,\xx). \]

The submatrix $\Delta_0^\line$ of the vertex-weighted Laplacian of $\line G$ omitting the row and column~$e_*$ equals $B_0 A_0 - D_0^\line$, where $D_0^\line$ is the submatrix of $D^\line$ omitting row and column~$e_*$.  Since $A_0 D_0^\line = D A_0$, we have
	\[ A_0 \Delta_0^\line = A_0 (B_0 A_0 - D_0^\line) = A_0 B_0 A_0 - DA_0 = (A_0 B_0-D) A_0 \]
hence $\Delta_0^\line(\ker(A_0)) \subset \ker(A_0)$.  Now by Theorem~\ref{thm:rootedmatrixtree},
	\begin{align*} 
	\kappa^{vertex}(\line G, e_*, \xx)
		&= \det \left(-\Delta_0^\line \right) \\
		&= \det \left(-\Delta_0^\line|_{\ker(A_0)} \right) \det \left(-\Delta_0^\line|_{\ker(A_0)^\perp} \right). 
	\end{align*}
	
By hypothesis, the graph $G\setminus e_*$ has no sources, so $A_0$ has full rank.  The rest of the proof proceeds as before, giving
	\[ \det \left(-\Delta_0^\line|_{\ker(A_0)^\perp} \right) = \det(D - A_0 B_0) = x_{e_*} \kappa^{edge}(G,w_*,\xx) \]
and 
	\[ \det \left(-\Delta_0^\line|_{\ker(A_0)} \right) = \left( \sum_{\ssource(e)=v_*} x_e \right)^{\iindeg(v_*)-2} 
	\prod_{v \neq v_*} \left( \sum_{\ssource(e)=v} x_e \right)^{\iindeg(v)-1}. \qed \]
\renewcommand{\qedsymbol}{}
\end{proof}

Setting all $x_e=1$ in Theorem~\ref{thm:rootedtreeenum} yields the enumeration
	\begin{equation} \label{rootedproductformula} \kappa(\line G, e_*) = \frac{\kappa(G,w_*)}{\outdeg(v_*)} \pi(G) \end{equation}
%	\[ \kappa(\line G, e_*) = \kappa(G,w_*)\, \outdeg(v_*)^{\iindeg(v_*)-2} \prod_{v \neq v_*} \outdeg(v)^{\iindeg(v)-1} \]
where $\kappa(G,w_*)$ is the number of oriented spanning trees of~$G$ rooted at~$w_*$, and 	
	\[ \pi(G) = \prod_{v \in V} \outdeg(v)^{\iindeg(v)-1}. \]  
It is interesting to compare this formula to the theorem of Knuth~\cite{Knuth67}, which in our notation reads
	\begin{equation} \label{knuthproduct} \kappa(\line G, e_*) = \left( \kappa(G,v_*) - \frac{1}{\outdeg(v_*)}\sum_{\substack{\ttarget(e)=v_* \\ e\neq e_*}} \kappa(G,\source(e)) \right) \pi(G). \end{equation}
%	\[ \kappa(\line G, e_*) = \left( \kappa(G,v_*) - \sum_{\substack{\ttarget(e)=v_* \\ e\neq e_*}} \frac{\kappa(G,\source(e))}{\outdeg(v_*)} \right) \prod_{v \in V} \outdeg(v)^{\iindeg(v)-1}. \]
To see directly why the right sides of (\ref{rootedproductformula}) and (\ref{knuthproduct}) are equal, we define a \emph{unicycle} to be a spanning subgraph of~$G$ which contains a unique directed cycle, and in which every vertex has outdegree~$1$.  If vertex $v_*$ is on the unique cycle of a unicycle~$U$, we say that $U$ goes through $v_*$.

\begin{lemma}
\label{unicycles}
%For any finite directed graph $G$,
	\[ \kappa^{edge}(G,v_*,\xx) \sum_{\ssource(e)=v_*} x_e = \sum_{\ttarget(e)=v_*} \kappa^{edge}(G,\source(e),\xx) \,x_e. \]
\end{lemma}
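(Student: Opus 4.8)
The plan is to prove the identity bijectively, by showing that both sides enumerate the same weighted family of combinatorial objects: the unicycles of $G$ that go through $v_*$, each weighted by $\prod_{e \in U} x_e$, the product of $x_e$ over all edges $e$ of the unicycle $U$.

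First I would interpret the left-hand side. An oriented spanning tree $T$ rooted at $v_*$ has every vertex of outdegree one except $v_*$, which has outdegree zero. Multiplying $\prod_{e \in T} x_e$ by $x_e$ for an edge $e$ with $\source(e) = v_*$ corresponds to adjoining $e$ to $T$. Since $v_*$ then acquires outdegree one while every other vertex keeps outdegree one, and $T$ itself has no directed cycles, the graph $T \cup \{e\}$ is a unicycle whose unique cycle passes through $v_*$, of weight $x_e \prod_{f \in T} x_f = \prod_{f \in T \cup \{e\}} x_f$. Conversely, in any unicycle $U$ through $v_*$ the vertex $v_*$ has a unique outgoing edge $e$, which lies on the unique cycle; deleting $e$ restores outdegree zero at $v_*$ and destroys the cycle, yielding a spanning tree rooted at $v_*$. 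These two operations are mutually inverse and weight-preserving, so the left-hand side equals $\sum_U \prod_{e \in U} x_e$, the sum over unicycles through $v_*$.

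Next I would interpret the right-hand side symmetrically. For an edge $e$ with $\target(e) = v_*$, multiplying $\kappa^{edge}(G, \source(e), \xx)$ by $x_e$ corresponds to adjoining $e$ to a spanning tree $T$ rooted at $\source(e)$. Because $T$ contains a unique directed path from $v_*$ to the root $\source(e)$, adjoining the edge $e$ from $\source(e)$ to $v_* = \target(e)$ closes exactly one directed cycle, and it passes through $v_*$; thus $T \cup \{e\}$ is again a unicycle through $v_*$ of the correct weight. The inverse map sends a unicycle $U$ through $v_*$ to the pair $(e, U \setminus \{e\})$, where $e$ is the unique edge of the cycle entering $v_*$; deleting $e$ leaves $\source(e)$ with outdegree zero and removes the cycle, producing a spanning tree rooted at $\source(e)$. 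Summing over all edges with target $v_*$ gives the same quantity $\sum_U \prod_{e \in U} x_e$, and comparing the two interpretations proves the lemma.

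The step requiring the most care is verifying that each adjoined edge produces a genuine unicycle, with a \emph{unique} directed cycle, and that the edge deleted in the reverse direction is canonically determined. On the left this is the unique outgoing edge at $v_*$; on the right it is the unique \emph{cycle} edge entering $v_*$, and here one must note that $v_*$ may have several incoming tree edges not on the cycle, so one must single out the cycle edge rather than an arbitrary edge with target $v_*$. Once these bijections and their weight-preservation are checked, including degenerate cases such as a loop at $v_*$, the identity follows at once.
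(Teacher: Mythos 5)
Your proposal is correct and follows essentially the same route as the paper: both sides are identified with the weighted sum $\sum_U \prod_{e\in U} x_e$ over unicycles $U$ through $v_*$, via the add/delete-an-edge bijections between spanning trees rooted at $\source(e)$ and unicycles whose cycle contains $e$. Your write-up is simply a more detailed version of the paper's argument, including the same key observation that the cycle of $U$ contains a unique edge leaving $v_*$ and a unique edge entering $v_*$.
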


\begin{proof}
Removing~$e$ gives a bijection from unicycles containing a fixed edge~$e$ to spanning trees rooted at~$\source(e)$.
If~$U$ is a unicycle through~$v_*$, then the cycle of~$U$ contains a unique edge~$e$ with~$\source(e)=v_*$ and a unique edge~$e'$ with~$\target(e')=v_*$, so both sides are equal to
	\[ \sum_{U} \prod_{e \in U} x_e \]
where the sum is over all unicycles $U$ through $v_*$.
\end{proof}

Setting all $x_e=1$ in Lemma~\ref{unicycles} yields
	\[ \kappa(G,v_*) \,\outdeg(v_*) = \sum_{\ttarget(e)=v_*} \kappa(G,\source(e)). \]
% so when G is Eulerian, \kappa(G,v) is harmonic and hence constant
% can we make the maximum principle combinatorial in order to get a bijection??
Hence the factor appearing in front of $\pi(G)$ in Knuth's formula (\ref{knuthproduct}) is equal to $\kappa(G,w_*)/\outdeg(v_*)$. 

We conclude this section by discussing some special cases and interesting examples of Theorem~\ref{thm:weightedtreeenum}.

%\item Taking all weights $x_e=1$ we obtain
%	\[ \kappa(\line G) = \kappa(G) \prod_{v \in V} \outdeg(v)^{\iindeg(v)-1}. \]
	
\subsection{Deletion and contraction} Fix an edge $e \in E$ which is not a loop, i.e., $\source(e)\neq \target(e)$.  Let 
	\[ G \setminus e = (V,E-\{e\}) \] 
be the graph obtained by deleting~$e$ from~$G$.  While there is more than one sensible way to define contraction for directed graphs, the following definition is natural from the point of view of oriented spanning trees.  Let $G/e$ be the graph obtained from~$G$ by first deleting all edges~$f$ with $\source(f)=\source(e)$, and then identifying the vertices~$\source(e)$ and~$\target(e)$.  Formally, $G/e = (V/e, E/e)$, where
	\[ V/e = V - \{\source(e),\target(e)\} \cup \{e\} \]
and
	\[ E/e = E - \{f | \source(f)=\source(e)\}. \] 
The source and target maps for $G/e$ are given by $p \circ \source \circ i$ and $p \circ \target \circ i$, where $i : E/e \to E$ is inclusion, and $p : V \to V/e$ is given by $p(\source(e))=p(\target(e))=e$, and $p(v)=v$ for $v \neq \source(e),\target(e)$.

With these definitions, the spanning tree enumerator $\kappa^{edge}$ satisfies the following deletion-contraction recurrence.

\begin{lemma}
\label{deletioncontraction}
Let $G$ be a finite directed graph, and let~$e$ be a non-loop edge of~$G$.  Then
	\[ \kappa^{edge}(G,\xx) = \kappa^{edge}(G \setminus e, \xx) + x_e \kappa^{edge}(G/e, \xx).  \]
\end{lemma}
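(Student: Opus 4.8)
The plan is to prove the deletion-contraction recurrence by partitioning the oriented spanning trees of $G$ according to whether or not they use the distinguished edge $e$. Recall that
	\[ \kappa^{edge}(G,\xx) = \sum_T \prod_{f \in T} x_f, \]
summed over all oriented spanning trees $T$ of $G$. Since $e$ is a non-loop edge, every spanning tree $T$ either contains $e$ or does not, and this gives a disjoint partition of the set of spanning trees. Summing the monomial weights separately over the two classes already splits $\kappa^{edge}(G,\xx)$ into two pieces; the content of the lemma is to identify these two pieces with $\kappa^{edge}(G\setminus e,\xx)$ and $x_e\,\kappa^{edge}(G/e,\xx)$ respectively.

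The first piece is immediate: a spanning tree $T$ of $G$ \emph{not} containing $e$ is exactly a spanning tree of $G\setminus e = (V, E-\{e\})$, since removing the single edge $e$ from the ambient graph does not change the collection of subgraphs avoiding $e$, nor their vertex sets or acyclicity or the outdegree conditions. Hence
	\[ \sum_{T \not\ni e} \prod_{f\in T} x_f = \kappa^{edge}(G\setminus e, \xx). \]
The second piece is the heart of the argument. I would exhibit a weight-preserving bijection (up to the factor $x_e$) between oriented spanning trees of $G$ \emph{containing} $e$ and oriented spanning trees of $G/e$. Given such a $T$, since $T$ has outdegree exactly $1$ at every non-root vertex and $e$ is the unique outedge of $\source(e)$ in $T$, the tree $T$ uses no other edge leaving $\source(e)$; removing $e$ and identifying $\source(e)$ with $\target(e)$ yields a subgraph of $G/e$. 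One checks this subgraph is again an oriented spanning tree: the vertex identification respects the definition of $G/e$ (which deletes precisely the edges $f$ with $\source(f)=\source(e)$, the ones $T$ cannot have used anyway), acyclicity is preserved because $e$ was the only edge out of $\source(e)$, and the outdegree-$1$ condition descends correctly to the merged vertex. Conversely, any spanning tree of $G/e$ lifts uniquely by re-inserting $e$. Under this bijection the monomial weight changes exactly by the factor $x_e$ corresponding to the edge $e$ that is present in $T$ but not recorded in $G/e$, so
	\[ \sum_{T \ni e} \prod_{f\in T} x_f = x_e\,\kappa^{edge}(G/e, \xx). \]

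Adding the two pieces gives the claimed identity. The main obstacle I anticipate is the careful verification that the contraction map really does land in the oriented spanning trees of $G/e$ and is a bijection: one must confirm that the outdegree bookkeeping at the identified vertex works out (the merged vertex should inherit outdegree $1$ if $\target(e)$ was non-root, and outdegree $0$ if $\target(e)$ was the root), that no directed cycle is created or destroyed, and that the root of $T$ maps to the correct root of the contracted tree. These checks hinge on the specific, slightly asymmetric definition of $G/e$ given in the excerpt, which deletes all edges out of $\source(e)$ before merging; this is exactly engineered so that the subgraphs of $G$ containing $e$ correspond cleanly to subgraphs of $G/e$, and I would make sure each clause of the definition of an oriented spanning tree is matched under the correspondence. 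The weight-tracking itself is routine once the bijection is pinned down.
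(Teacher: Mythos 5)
Your proposal is correct and matches the paper's proof: both partition the oriented spanning trees of $G$ by whether they contain $e$, identify the trees avoiding $e$ with those of $G\setminus e$, and use the (weight-preserving up to $x_e$) bijection between trees containing $e$ and trees of $G/e$ --- the paper simply states this bijection in the inverse direction, as $T \mapsto T \cup \{e\}$. The verification details you flag (outdegree at the merged vertex, acyclicity, the role of deleting all edges out of $\source(e)$) are exactly what the paper leaves to the reader with ``one easily checks.''
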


\begin{proof}
Oriented spanning trees of $G \setminus e$ are in bijection with oriented spanning trees of~$G$ that do not contain the edge~$e$.  With the above definition of $G/e$, one easily checks that the map $T \mapsto T \cup \{e\}$ defines a bijection from oriented spanning trees of $G/e$ to oriented spanning trees of $G$ that contain the edge $e$.
\end{proof}

Suppose now that we set $x_f=1$ for all $f\neq e$.  The coefficient of $x_e^\ell$ in $\kappa^{vertex}(\line G,\xx)$ then counts the number of oriented spanning trees~$T$ of~$\line G$ with $\indeg_T(e)=\ell$.  If $v=\source(e)$ has indegree~$k$ and outdegree~$m$, then by Theorem~\ref{thm:weightedtreeenum} and Lemma~\ref{deletioncontraction}, this number is given by the coefficient of $x_e^\ell$ in the product
	\[ \left[ \kappa(G\setminus e) + x_e \kappa(G/e) \right] (m-1+x_e)^{k-1} \prod_{w \neq v} \outdeg(w)^{\indeg(w)-1}. \]
Using the binomial theorem, we obtain the following.

\begin{prop}
Let $G=(V,E)$ be a finite directed graph with no sources.  Fix a non-loop edge $e \in E$ and an integer $\ell \geq 0$.  The number of oriented spanning trees~$T$ of~$\line G$ satisfying $\indeg_T(e)=\ell$ is given by
	\begin{align*} \prod_{w\neq v} \outdeg(w)^{\indeg(w)-1} \left( \binomial{k-1}{\ell} \kappa(G \setminus e) (m-1)^{k-1-\ell} \,+ \right. \\
	+ \left. \binomial{k-1}{\ell-1} \kappa(G/e) (m-1)^{k-\ell} \right)
	\end{align*} 
where $v=\source(e)$, $k = \indeg(v)$ and $m=\outdeg(v)$.
\end{prop}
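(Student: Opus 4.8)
The plan is to read off the coefficient of $x_e^\ell$ from the product displayed immediately above the statement, since the preceding discussion has already done all the structural work. Recall that, with $x_f$ set to $1$ for every $f \neq e$, Theorem~\ref{thm:weightedtreeenum} together with Lemma~\ref{deletioncontraction} identifies $\kappa^{vertex}(\line G,\xx)$ with
\[ \left[ \kappa(G\setminus e) + x_e \kappa(G/e) \right] (m-1+x_e)^{k-1} \prod_{w \neq v} \outdeg(w)^{\indeg(w)-1}, \]
and the coefficient of $x_e^\ell$ in this expression counts the oriented spanning trees $T$ of $\line G$ with $\indeg_T(e)=\ell$. The final factor $\prod_{w\neq v}\outdeg(w)^{\indeg(w)-1}$ carries no $x_e$, so it factors out, and the task reduces to computing $[x_e^\ell]$ of the first two factors.

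First I would expand the middle factor by the binomial theorem,
\[ (m-1+x_e)^{k-1} = \sum_{j\geq 0} \binomial{k-1}{j} (m-1)^{k-1-j} x_e^{j}. \]
Multiplying by $\kappa(G\setminus e) + x_e\kappa(G/e)$ and collecting the coefficient of $x_e^\ell$ then produces exactly two contributions: the summand $j=\ell$ paired with $\kappa(G\setminus e)$, giving $\binomial{k-1}{\ell}\kappa(G\setminus e)(m-1)^{k-1-\ell}$, and the summand $j=\ell-1$ paired with $x_e\kappa(G/e)$, giving $\binomial{k-1}{\ell-1}\kappa(G/e)(m-1)^{k-\ell}$. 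Reinstating the factored-out product yields the claimed formula.

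There is no genuine obstacle here: the essential content---the generating-function interpretation of $\indeg_T(e)$ and the deletion-contraction split of $\kappa^{edge}(G,\xx)$---already appears in the paragraph preceding the statement, so what remains is a one-line binomial extraction. The only points worth a second glance are the boundary conventions, which the formula handles automatically: when $\ell=0$ the term $\binomial{k-1}{-1}$ vanishes, leaving only the $\kappa(G\setminus e)$ contribution, and when $\ell$ exceeds the indegree $k$ of $e$ as a vertex of $\line G$ both binomial coefficients vanish, correctly returning $0$.
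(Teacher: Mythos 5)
Your proposal is correct and is precisely the paper's own argument: the paper derives the displayed product via Theorem~\ref{thm:weightedtreeenum} and Lemma~\ref{deletioncontraction} in the paragraph preceding the proposition and then states ``Using the binomial theorem, we obtain the following,'' which is exactly the coefficient extraction you carry out. Your boundary-case remarks ($\ell=0$ and $\ell>k$) are a harmless bonus.
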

	
\subsection{Complete graph} Taking $G$ to be the graph with one vertex and $n$ loops, so that $\line G$ is the complete directed graph $\vec{K}_n$ on~$n$ vertices (including a loop at each vertex), we obtain from Theorem~\ref{thm:weightedtreeenum} the classical formula
	\[ \kappa^{vertex}(\vec{K}_n) = (x_1 + \ldots + x_n)^{n-1}. \]
For a generalization to forests, see \cite[Theorem 5.3.4]{Stanley}.
Note that oriented spanning trees of $\vec{K}_n$ are in bijection with rooted spanning trees of the complete undirected graph $K_n$, by forgetting orientation.  
	
\subsection{Complete bipartite graph} Taking $G$ to have two vertices, $a$ and $b$, with $m$ edges directed from $a$ to $b$ and $n$ edges directed from $b$ to $a$, we obtain from Theorem~\ref{thm:weightedtreeenum}
	\[ \begin{split} \kappa^{vertex}(\vec{K}_{m,n}) = 
	(x_1 + \ldots + x_m + y_1 + \ldots + y_n)  \,\times \quad \\ \times\, 
	(x_1 + \ldots + x_m)^{n-1} (y_1 + \ldots + y_n)^{m-1}. \end{split} \]
where $\vec{K}_{m,n} = \line G$ is the bidirected complete bipartite graph on $m+n$ vertices.  The variables $x_1,\ldots,x_m$ correspond to vertices in the first part, and $y_1,\ldots,y_n$ correspond to vertices in the second part.  As with the complete graph, oriented spanning trees of~$\vec{K}_{m,n}$ are in bijection with rooted spanning trees of the undirected complete bipartite graph $K_{m,n}$ by forgetting orientation.

%\item Taking $G$ to be the bidirected path $P_n$ on $n$ vertices, we obtain
%	\[ \kappa^{vertex}(\line P_n) = \left(\sum_{i=0}^{n-1} x_1\ldots x_i y_{i+1} \ldots y_{n-1} \right)\prod_{i=1}^{n-2} (x_{i+1}+y_i). \] 

\subsection{De Bruijn graphs} The spanning tree enumerators for the first few de Bruijn graphs are
	\begin{align*} \kappa^{vertex}(DB_1) = x_0 + x_1; \end{align*}
	\begin{align*} \kappa^{vertex}(DB_2) = (x_{00}+x_{01})(x_{10}+x_{11})(x_{01} + x_{10}); \end{align*}
	\begin{align*} \kappa^{vertex}(DB_3) = (x_{000}+x_{001}) (x_{010} + x_{011}) (x_{100} + x_{101}) (x_{110} + x_{111}) \times \\ 
	\times \big(x_{011}x_{110}x_{100} + x_{010}x_{110}x_{100} + x_{110}x_{101}x_{001} + x_{110}x_{100}x_{001} \, + \\
	+\, x_{100}x_{001}x_{011} + x_{101}x_{001}x_{011} + x_{001}x_{010}x_{110} + x_{001}x_{011}x_{110}\big).
% second line is bitwise complement of the first
		\end{align*}

\section{Sandpile Groups}
\label{sandpilegroups}

Let $G=(V,E)$ be a strongly connected finite directed graph, loops and multiple edges allowed.
Consider the free abelian group $\Z^V$ generated by the vertices of $G$; we think of its elements as formal linear combinations of vertices with integer coefficients.  For $v \in V$ let
	\[ \Delta_v = \sum_{\ssource(e)=v} (\target(e) - v) \in \Z^V \]
where the sum is over all edges $e \in E$ such that $\source(e)=v$.  Fixing a vertex~$v_* \in V$, let $L_V$ be the subgroup of $\Z^V$ generated by $v_*$ and $\{\Delta_v\}_{v\neq v_*}$.  The \emph{sandpile group} $\K(G,v_*)$ is defined as the quotient group
	\[ \K(G,v_*) = \Z^V / L_V. \]
	
The $V \times V$ integer matrix whose column vectors are $\{\Delta_v\}_{v \in V}$ is called the \emph{Laplacian} of $G$.  By Theorem~\ref{thm:rootedmatrixtree}, its principal minor omitting the row and column corresponding to~$v_*$ counts the number $\kappa(G,v_*)$ of oriented spanning trees of~$G$ rooted at $v_*$. 
%(that is, spanning subgraphs of $G$ containing no directed cycles, in which $v$ has outdegree~$0$, and every other vertex has outdegree~$1$); 
%(This is an alternate form of the matrix-tree theorem, \cite[Theorem~5.6.4]{Stanley}.)  
Since this minor is also the index of $L_V$ in $\Z^V$, we have
	\[ \# \K(G,v_*) = \kappa(G,v_*). \]
Recall that $G$ is \emph{Eulerian} if $\indeg(v)=\outdeg(v)$ for every vertex $v$.
If~$G$ is Eulerian, then the groups $\K(G,v_*)$ and $\K(G,v'_*)$ are isomorphic for any vertices $v_*$ and $v'_*$ \cite[Lemma~4.12]{HLMPPW}.  In this case we usually denote the sandpile group just by~$\K(G)$.  

The sandpile group arose independently in several fields, including arithmetic geometry~\cite{Lor89,Lor91}, statistical physics~\cite{Dhar} and algebraic combinatorics~\cite{Biggs}.  Often it is defined for an undirected graph $G$; to translate this definition into the present setting of directed graphs, replace each undirected edge by a pair of directed edges oriented in opposite directions.  Sandpiles on directed graphs were first studied in \cite{Speer}.  For a survey of the basic properties of sandpile groups of directed graphs and their proofs, see \cite{HLMPPW}.

The goal of this section is to relate the sandpile groups of an Eulerian graph $G$ and its directed line graph $\line G$.  To that end, let $\Z^E$ be the free abelian group generated by the edges of $G$.  For $e \in E$ let
	\[ \Delta_e = \sum_{\ssource(f)=\target(e)} (f-e) \in \Z^E. \]
Fix an edge $e_* \in E$, and let $v_*=\target(e_*)$.  Let $L_E \subset \Z^E$ be the subgroup generated by $e_*$ and $\{\Delta_e\}_{e \neq e_*}$.  Then the sandpile group associated to~$\line G$ and $e_*$ is
 	\[ \K(\line G, e_*) = \Z^E/L_E. \]
Note that $\line G$ may not be Eulerian even when $G$ is Eulerian.  For example, if $G$ is a bidirected graph (i.e., a directed graph obtained by replacing each edge of an undirected graph by a pair of oppositely oriented directed edges) then $G$ is Eulerian, but $\line G$ is not Eulerian unless all vertices of~$G$ have the same degree.

We will work with maps $\phi$ and $\psi$ relating the sandpile groups of~$G$ and~$\line G$.
These maps are analogous to the incidence matrices~$A$ and~$B$ from section~\ref{spanningtrees}, except that now we work over~$\Z$ instead of the field~$\Q(\xx)$.

\begin{lemma}
\label{phidescends}
Let $\phi : \Z^E \to \Z^V$ be the $\Z$-linear map sending $e \mapsto \target(e)$. 
If $G$ is Eulerian, then $\phi$ descends to a surjective group homomorphism 
	\[ \bar{\phi} : \K(\line G, e_*) \to \K(G, v_*). \]
% The proof uses only that \Delta_{v_*} lies in the integer span of \{\Delta_v\}_{v\neq v_*}.
% Are there any graphs other than Eulerian graphs which satisfy this?
\end{lemma}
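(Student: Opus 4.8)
The plan is to verify the two assertions of the lemma in turn: first that $\phi$ carries $L_E$ into $L_V$, so that it induces a well-defined map $\bar\phi$ on the quotients $\K(\line G,e_*)=\Z^E/L_E$ and $\K(G,v_*)=\Z^V/L_V$, and then that the induced map is onto. Since both $L_E$ and $L_V$ are given by explicit generators, I would carry out the descent by checking $\phi$ directly on the generators of $L_E$, namely $e_*$ and the $\Delta_e$ for $e\neq e_*$.

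The generator $e_*$ poses no difficulty: $\phi(e_*)=\target(e_*)=v_*$, which lies in $L_V$ by definition. The substantive computation is to evaluate $\phi(\Delta_e)$. Writing $w=\target(e)$ and applying $\phi$ termwise to $\Delta_e=\sum_{\source(f)=w}(f-e)$, each edge $f$ out of $w$ contributes $\target(f)$, while the $\outdeg(w)$ subtracted copies of $e$ contribute $\target(e)=w$; collecting terms gives $\phi(\Delta_e)=\sum_{\source(f)=w}(\target(f)-w)=\Delta_{\target(e)}$. So $\phi$ sends the Laplacian generator at the vertex $e$ of $\line G$ to the Laplacian generator at the vertex $\target(e)$ of $G$. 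It then remains to see that $\Delta_{\target(e)}\in L_V$. When $\target(e)\neq v_*$ this is immediate, as $\Delta_v$ for $v\neq v_*$ is one of the defining generators of $L_V$. The single case that requires the Eulerian hypothesis is $\target(e)=v_*$: here I would invoke the column-sum identity $\sum_{v\in V}\Delta_v=\sum_{e\in E}(\target(e)-\source(e))=\sum_{v\in V}(\indeg(v)-\outdeg(v))\,v$, which vanishes exactly because $G$ is Eulerian. This permits writing $\Delta_{v_*}=-\sum_{v\neq v_*}\Delta_v$, exhibiting $\Delta_{v_*}$ as a member of $L_V$. Combining the cases yields $\phi(L_E)\subseteq L_V$, so $\phi$ descends to $\bar\phi$. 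I expect this vanishing-column-sum step to be the crux of the argument, since it is the only point at which the Eulerian assumption is used.

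Finally, for surjectivity I would observe that $\phi$ is already surjective as a map $\Z^E\to\Z^V$: because $G$ is strongly connected, every vertex $v$ has $\indeg(v)\geq 1$, hence $v=\target(e)$ for some edge $e$ and so $v\in\phi(\Z^E)$; as the vertices form a basis of $\Z^V$, this forces $\phi$ onto. Passing to quotients immediately gives that $\bar\phi$ is onto as well. This last step is routine once the descent has been established.
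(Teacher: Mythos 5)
Your proposal is correct and follows essentially the same route as the paper's proof: the same computation $\phi(\Delta_e)=\Delta_{\ttarget(e)}$, the same use of the Eulerian hypothesis via the vanishing sum $\sum_{v\in V}\Delta_v=0$ to handle the case $\ttarget(e)=v_*$, and the same surjectivity argument from strong connectivity. No gaps.
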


\begin{proof}
To show that $\phi$ descends, it suffices to show that $\phi(L_E) \subset L_V$.  For any $e \in E$, we have
	\begin{align*} \phi(\Delta_e) &=  
		\sum_{\ssource(f)=\target(e)} (\target(f)-\target(e))
					= \Delta_{\ttarget(e)}.	
   	\end{align*}
The right side lies in $L_V$ by definition if $\target(e)\neq v_*$.  Moreover, since $G$ is Eulerian,
	\[ \sum_{v \in V} \Delta_v = \sum_{e \in E} (\target(e)-\source(e)) = \sum_{v \in V} (\indeg(v)-\outdeg(v)) v = 0, \]
so $\Delta_{v_*} = - \sum_{v\neq v_*} \Delta_v$ also lies in $L_V$.
Finally, $\phi(e_*) = v_* \in L_V$, and hence $\phi(L_E) \subset L_V$.

Since $G$ is strongly connected, every vertex has at least one incoming edge, so $\phi$ is surjective, and hence $\bar{\phi}$ is surjective.
\end{proof}

Let $k$ be a positive integer.  We say that $G$ is \emph{balanced $k$-regular} if $\indeg(v)=\outdeg(v)=k$ for every vertex~$v$.  Note that any balanced $k$-regular graph is Eulerian; and if $G$ is balanced $k$-regular, then its directed line graph $\line G$ is also balanced $k$-regular.  In particular, this implies
	\[ \sum_{e \in E} \Delta_e = 0 \]
so that $\Delta_{e_*} \in L_E$.

Now consider the $\Z$-linear map 
	 \begin{equation*} \begin{split} 
	 	\psi : \Z^V \to \Z^E \\
%	 	v \mapsto \sum_{\ssource(e)=v} e
	\end{split} \end{equation*} 
sending $v \mapsto \sum_{\ssource(e)=v} e$.  For a group $\Gamma$, write $k\Gamma = \{ kg | g \in \Gamma\}$.

\begin{lemma}
\label{multiplesofk}
If $G$ is balanced $k$-regular, then $\psi$ descends to a group isomorphism 
	\[ \bar{\psi} : \K(G) \xrightarrow{\simeq} k\,\K(\line G). \]
\end{lemma}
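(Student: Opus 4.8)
The plan is to verify, in order, that $\psi$ descends to $\bar\psi$, that $\bar\psi$ surjects onto $k\,\K(\line G)$, and finally that $\bar\psi$ is injective. Throughout I would exploit the two operator identities mirroring the incidence factorizations of Section~\ref{spanningtrees}: writing $\Delta:\Z^V\to\Z^V$ and $\Delta^\line:\Z^E\to\Z^E$ for the Laplacians with columns $\Delta_v$ and $\Delta_e$, one has $\phi\psi=\Delta+k\,\Id$ on $\Z^V$ and $\psi\phi=\Delta^\line+k\,\Id$ on $\Z^E$, since $\outdeg\equiv k$. These give the intertwining relation $\psi\Delta=\psi(\phi\psi-k\,\Id)=(\psi\phi-k\,\Id)\psi=\Delta^\line\psi$. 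I also record $L_V=\Z v_*+\Im(\Delta)$ and $L_E=\Z e_*+\Im(\Delta^\line)$, using that the Eulerian condition forces $\sum_v\Delta_v=0$ and $\sum_e\Delta_e=0$. To see that $\psi$ descends, note $\psi(\Im\Delta)=\Delta^\line(\Im\psi)\subseteq\Im(\Delta^\line)\subseteq L_E$, while $\psi(v_*)=\psi\phi(e_*)=\Delta_{e_*}+ke_*\in L_E$; hence $\psi(L_V)\subseteq L_E$ and $\bar\psi:\K(G)\to\K(\line G)$ is well defined.

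For the image, descending $\psi\phi=\Delta^\line+k\,\Id$ gives $\bar\psi\bar\phi=k\cdot\Id$ on $\K(\line G)$, the Laplacian term vanishing because $\Im(\Delta^\line)\subseteq L_E$. Since $\bar\phi$ is surjective by Lemma~\ref{phidescends}, it follows that $\Im(\bar\psi)=\bar\psi\bigl(\bar\phi(\K(\line G))\bigr)=k\,\K(\line G)$, so $\bar\psi$ maps onto $k\,\K(\line G)$ as claimed.

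The hard part is injectivity, which amounts to showing $\psi^{-1}(L_E)=L_V$; the reverse inclusion is the previous step, so suppose $\psi(x)\in L_E$ and write $\psi(x)=ce_*+\Delta^\line(z)$ with $c\in\Z$, $z\in\Z^E$. Substituting $\Delta^\line=\psi\phi-k\,\Id$ and rearranging yields $\psi(x-\phi(z))=ce_*-kz$. The key observation is that the right-hand side now lies in $\Im(\psi)$, and $\Im(\psi)$ consists exactly of those edge-vectors whose coordinate is constant on each out-star $\{e:\source(e)=v\}$, because $\psi(v)$ is the primitive characteristic vector of the out-star of $v$ and the out-stars partition $E$. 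Comparing the $e_*$-coordinate of $ce_*-kz$ with the coordinate of another edge out of $w_*=\source(e_*)$ — such an edge exists once $k\geq 2$, while the case $k=1$ is trivial since then $\K(G)$ and $\K(\line G)$ both vanish — forces $c\equiv 0\pmod{k}$, say $c=kc'$.

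With $k\mid c$ in hand the argument closes quickly: now $\psi(x-\phi(z))=k(c'e_*-z)$ lies in $k\Z^E\cap\Im(\psi)=k\,\Im(\psi)$, the last equality because $\Im(\psi)$ is a direct summand of $\Z^E$ (the quotient $\Z^E/\Im(\psi)$ is free, decomposing over the out-stars $S$ as a sum of the free modules $\Z^{S}/\Z\,\mathbf 1_S$). As $\psi$ is injective this gives $x-\phi(z)=ku'$ and $c'e_*-z=\psi(u')$ for some $u'\in\Z^V$; substituting $z=c'e_*-\psi(u')$ into $x=ku'+\phi(z)$ and using $\phi\psi=\Delta+k\,\Id$ and $\phi(e_*)=v_*$ collapses everything to $x=c'v_*-\Delta(u')\in\Z v_*+\Im(\Delta)=L_V$, so $\ker(\bar\psi)=0$. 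I expect this divisibility-plus-saturation step to be the crux: purely formal manipulation with $\phi$ and $\psi$ shows only that $\ker(\bar\psi)$ is $k$-torsion (both composites $\bar\phi\bar\psi$ and $\bar\psi\bar\phi$ are multiplication by $k$, which alone cannot force an isomorphism), and it is precisely the extra input that $\Im(\psi)$ is saturated together with the forced congruence $k\mid c$ that breaks the symmetry between $G$ and $\line G$ and pins the kernel down to zero.
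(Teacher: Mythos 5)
Your proof is correct. It follows the same overall skeleton as the paper's---descend $\psi$, identify the image as $k\,\K(\line G)$, then prove injectivity by showing $\psi^{-1}(L_E)\subseteq L_V$, with $k=1$ trivial and $k\geq 2$ supplying a second edge in each out-star---but the injectivity step is organized genuinely differently. The paper works coefficient-by-coefficient: writing $\psi(\eta)=\sum_e b_e\Delta_e+b_*e_*$ and $\eta=\sum_v a_v v$, it equates coefficients to get $kb_f=\sum_{\target(e)=\source(f)}b_e-a_{\source(f)}$ for $f\neq e_*$, notes that the right side depends only on $\source(f)$, defines $F(v)=\frac{1}{k}\bigl(\sum_{\target(e)=v}b_e-a_v\bigr)$, proves $F$ is integer-valued using $k\geq 2$, and substitutes back to exhibit $\eta$ as an explicit combination of $v_*$ and the $\Delta_v$. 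You instead run everything through the operator identities $\phi\psi=\Delta+k\,\Id$ and $\psi\phi=\Delta^\line+k\,\Id$ together with two structural facts about the lattice $\Im(\psi)$: it is exactly the set of vectors constant on out-stars (which forces $k\mid c$), and it is a saturated direct summand of $\Z^E$ (which lets you divide by $k$ inside $\Im(\psi)$). Your $u'$ is, up to sign, the paper's $F$, so the mechanism is the same at bottom, but your packaging buys some clarity: the identities $\bar\phi\bar\psi=\bar\psi\bar\phi=k\cdot\Id$ are established once and reused (the paper re-derives $\psi\phi(e)=ke+\Delta_e$ inside the proof of Theorem~\ref{mainsequence}), the precise point where mere $k$-torsion of $\ker(\bar\psi)$ is upgraded to triviality is flagged explicitly, and the saturation statement is of a reusable, generalizable form. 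What the paper's bare-hands computation buys in exchange is self-containedness: it needs no direct-summand language and no appeal to the surjectivity of $\bar\phi$, since it gets the image directly from $\psi(v)=ke+\Delta_e$ for any edge $e$ with $\target(e)=v$.
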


\begin{proof}
We have
	\[ \psi(v_*) = \Delta_{e_*} + k e_* \in L_E \]
and for any vertex $v \in V$,
	\begin{align*}
	\psi(\Delta_v) &= \sum_{\ssource(e)=v} \psi(\target(e)) - k\psi(v) \\
			&=  \sum_{\ssource(e)=v} \sum_{\ssource(f)=\ttarget(e)} f - k\sum_{\ssource(g)=v} g \\
			&=  \sum_{\ssource(e)=v} \left( \sum_{\ssource(f)=\ttarget(e)} f - ke \right) \\
			&=  \sum_{\ssource(e)=v} \Delta_e.
	\end{align*}
Since $\line G$ is Eulerian, the right side lies in $L_E$.  Hence $\psi(L_V) \subset L_E$, and $\psi$ descends to a group homomorphism
	\[ \bar{\psi} : \K(G) \to \K(\line G). \]
If $v$ is any vertex of $G$, and $e$ is any edge with $\target(e)=v$, then
	\[ \psi(v) = ke + \Delta_{e}, \]
so the image of $\bar{\psi}$ is $k\,\K(\line G)$.  

To complete the proof it suffices to show that  $\psi^{-1}(L_E) \subset L_V$, so that $\bar{\psi}$ is injective.  If $k=1$ then $K(G)$ is the trivial group, so there is nothing to prove.  Assume now that $k\geq 2$. Given $\eta \in \Z^V$ with $\psi(\eta) \in L_E$, write
	\[ \psi(\eta) = \sum_{e\in E} b_e \Delta_e + b_*e_* \]
for some coefficients $b_e, b_* \in \Z$.  Then
	\begin{align*} \psi(\eta) - b_*e_* 
	&= \sum_{e\in E} b_e \left( \sum_{\ssource(f)=\ttarget(e)} f - ke \right) \\
	&= \sum_{f \in E} \left( \sum_{\ttarget(e)=\ssource(f)} b_e \right) f - \sum_{e \in E} kb_e e  \\
	&= \sum_{f \in E} \left( \sum_{\ttarget(e)=\ssource(f)} b_e - kb_f \right) f.
	\end{align*}					
Now writing $\eta = \sum_{v \in V} a_v v$, so that $\psi(\eta) = \sum_{f \in E} a_{\ssource(f)} f$, equating coefficients of $f$ gives
	\begin{equation} \label{equatingedgecoefs} kb_f = \sum_{\ttarget(e)=\ssource(f)} b_e - a_{\ssource(f)}, \qquad f\neq e_*. \end{equation}
Note that the right side depends only on $\source(f)$.  For $v \in V$, let
	\[ F(v) = \frac{1}{k} \sum_{\ttarget(e)=v} b_e - \frac{1}{k} a_v. \]  
Then $b_f = F(\source(f))$ for all edges $f \neq e_*$.  Since $k \geq 2$, for any $v \in V$ there exists an edge $f \neq e_*$ with $\source(f)=v$.  Moreover if $v \neq v_*$ and $\target(e)=v$, then $e\neq e_*$.  From (\ref{equatingedgecoefs}) we obtain
	\[ a_v = \sum_{\ttarget(e)=v} b_e - kb_f = \sum_{\ttarget(e)=v} F(\source(e)) - k F(v), \qquad v\neq v_*. \]
Hence	
	\begin{align*}
	\eta - a_{v_*}v_* = \sum_{v \neq v_*} a_v v 
%&= \sum_{v\neq v} \left( \sum_{\ttarget(e)=v} F(\source(e)) - k F(v) \right) v + a_{v}v \\
%		&= \sum_{e\in E, ~\ttarget(e)\neq v} (F(\source(e)) - F(\target(e))) \target(e) + a_{v} v \\
		&= \sum_{e\in E, ~\ttarget(e)\neq v_*} F(\source(e)) \target(e) - \sum_{v\neq v_*} k F(v) v \\
		&= \sum_{v \in V} F(v) \left( \sum_{\ssource(e)=v, ~\target(e)\neq v_*} \target(e) - k v \right) + kF(v_*)v_* \\
		&= \sum_{v \in V} F(v) \Delta_v + \left(kF(v_*) - \sum_{\ttarget(e)=v_*} F(\source(e)) \right) v_*.
	\end{align*}
The right side lies in $L_V$, so $\eta \in L_V$, completing the proof.
\end{proof}

\begin{proof}[Proof of Theorem~\ref{mainsequence}]
If $G$ is Eulerian, then $\phi$ descends to a surjective homomorphism of sandpile groups by Lemma~\ref{phidescends}.  If $G$ is balanced $k$-regular, then $\bar{\psi}$ is injective by Lemma~\ref{multiplesofk}, so
	\[ \ker(\bar{\phi}) = \ker(\bar{\psi} \circ \bar{\phi}). \]
Moreover for any edge $e \in E$
	\[ (\psi \circ \phi)(e) = \sum_{\ssource(f)=\target(e)} f = ke + \Delta_e. \]
Hence $\bar{\psi} \circ \bar{\phi}$ is multiplication by $k$, and $\ker(\bar{\phi})$ is the $k$-torsion subgroup of $\K(\line G)$.
\end{proof}

\section{Iterated Line Graphs}
\label{iteratedlinegraphs}

Let $G=(V,E)$ be a finite directed graph, loops and multiple edges allowed.
The \emph{iterated line digraph} $\line^n G = (E_n,E_{n+1})$ has as vertices the set
	\[ E_n = \{ (e_1,\ldots,e_n) \in E^n \,|\, \source(e_{i+1})=\target(e_i), ~i=1,\ldots,n-1 \} \]
of directed paths of $n$ edges in $G$.  The edge set of $\line^n G$ is $E_{n+1}$, and the incidence is defined by
	\begin{align*} \source(e_1,\ldots,e_{n+1}) &= (e_1,\ldots,e_n); \\
	 \target(e_1,\ldots,e_{n+1}) &= (e_2,\ldots,e_{n+1}). \end{align*}
(We also set $E_0=V$, and $\line^0 G = G$.)
For example, the de~Bruijn graph~$DB_n$ is~$\line^n(DB_0)$, where~$DB_0$ is the graph with one vertex and two loops.  
%The Kautz graph $\Kautz_n$ is $\line^{n-1}(T)$, where $T$ is the bidirected triangle.

Our next result relates the number of spanning trees of $G$ and $\line^n G$.  Given a vertex $v \in V$, let
	\[ p(n,v) = \# \{ (e_1,\ldots,e_n)\in E_n \,|\, \target(e_n)=v \} \]
be the number of directed paths of $n$ edges in $G$ ending at vertex $v$.  
\begin{theorem}
\label{iterated}
Let $G=(V,E)$ be a finite directed graph with no sources.  Then
	\[ \kappa(\line^n G) = \kappa(G) \prod_{v \in V} \mbox{\em outdeg}(v)^{p(n,v)-1}. \]
%where $p(n,v)$ is the number of paths of $n$ edges in $G$ ending at~$v$.
\end{theorem}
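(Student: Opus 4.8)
The plan is to prove the formula by induction on $n$, using the single-step product formula~(\ref{outtothein}) applied to the iterated line graph $\line^{n-1}G$. The base case $n=1$ is exactly~(\ref{outtothein}), once one observes that $p(1,v)=\indeg(v)$ is the number of edges with target $v$.

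For the inductive step (with $n\geq 2$) I would set $H=\line^{n-1}G$ and apply~(\ref{outtothein}) to $H$, writing $\line^n G=\line H$. First one must check that $H$ inherits the no-sources hypothesis: a vertex of $H$ is a path $(e_1,\dots,e_{n-1})$, and its indegree in $H$ equals $\indeg_G(\source(e_1))\geq 1$ since $G$ has no sources, so~(\ref{outtothein}) does apply to $H$. The crux is to express the in- and out-degrees of $H$ through those of $G$. Chasing the definitions of $\source$ and $\target$ on $\line^{n-1}G$, a vertex $u=(e_1,\dots,e_{n-1})$ satisfies
\[ \outdeg_H(u)=\outdeg_G(\target(e_{n-1})), \qquad \indeg_H(u)=\indeg_G(\source(e_1)). \]
Substituting into~(\ref{outtothein}) gives $\kappa(\line^n G)=\kappa(\line^{n-1}G)\,P$, where
\[ P=\prod_{(e_1,\dots,e_{n-1})\in E_{n-1}} \outdeg_G(\target(e_{n-1}))^{\,\indeg_G(\source(e_1))-1}. \]

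The main obstacle is simplifying $P$: the base of each factor depends on the end $\target(e_{n-1})$ of the path, while the exponent depends on its start $\source(e_1)$, so $P$ does not factor in any obvious way. The key is a prepending bijection. Grouping the factors by $v=\target(e_{n-1})$, the exponent of $\outdeg_G(v)$ in $P$ is
\[ \sum_{\substack{(e_1,\dots,e_{n-1})\in E_{n-1} \\ \target(e_{n-1})=v}} \bigl(\indeg_G(\source(e_1))-1\bigr). \]
The constant terms contribute $-p(n-1,v)$, since there are $p(n-1,v)$ summands. For the remaining sum, $\indeg_G(\source(e_1))$ counts edges $e_0$ with $\target(e_0)=\source(e_1)$, and prepending such an $e_0$ to $(e_1,\dots,e_{n-1})$ produces exactly the paths $(e_0,e_1,\dots,e_{n-1})\in E_n$ ending at $v$; hence this sum equals $p(n,v)$. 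Therefore the exponent of $\outdeg_G(v)$ in $P$ equals $p(n,v)-p(n-1,v)$, so that $P=\prod_{v\in V}\outdeg_G(v)^{\,p(n,v)-p(n-1,v)}$.

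Finally I would feed in the inductive hypothesis $\kappa(\line^{n-1}G)=\kappa(G)\prod_{v\in V}\outdeg_G(v)^{\,p(n-1,v)-1}$ and multiply by $P$. The two products telescope: the $p(n-1,v)$ in the exponents cancels, leaving $\kappa(\line^n G)=\kappa(G)\prod_{v\in V}\outdeg_G(v)^{\,p(n,v)-1}$, which completes the induction.
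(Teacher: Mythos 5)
Your proposal is correct and takes essentially the same route as the paper: the paper also applies the one-step formula (\ref{outtothein}) to the iterated line graph, identifies the in- and out-degrees of a path vertex as $\indeg_G(\source(e_1))$ and $\outdeg_G(\target(e_{n-1}))$, and rewrites the resulting product as $\prod_{v\in V}\outdeg(v)^{p(j+1,v)-p(j,v)}$ before telescoping over $j$. Your induction is just a repackaging of that telescoping product, with the added benefit that you spell out the prepending bijection and the no-sources check that the paper leaves implicit.
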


\begin{proof}
For any $j \geq 0$, by Theorem~\ref{thm:weightedtreeenum} applied to $\line^{j} G$ with all edge weights~$1$,
	\begin{align*} \frac{\kappa(\line^{j+1} G)}{\kappa(\line^{j} G)}  &= \prod_{(e_1,\ldots,e_{j}) \in E_{j}} \outdeg(\target(e_{j}))^{\iindeg(\ssource(e_1))-1} \\
			&= \prod_{v \in V} \outdeg(v)^{p(j+1,v)-p(j,v)}.
	\end{align*}
Taking the product over $j=0,\ldots,n-1$ yields the result.
\end{proof}

When $G$ is balanced $k$-regular, we have $p(n,v)=k^n$ for all vertices~$v$, so we obtain as a special case of Theorem~\ref{iterated} the result of Huaxiao, Fuji and Qiongxiang \cite[Theorem~1]{iterated}
	\[ \kappa(\line^n G) = \kappa(G) k^{(k^n -1)\#V}. \]
In particular, taking $G=DB_0$ yields the classical formula
	\[ \kappa(DB_n) = 2^{2^n-1}. \]
Since $DB_n$ is Eulerian, the number $\kappa(DB_n,v_*)$ of oriented spanning trees rooted at~$v_*$ does not depend on $v_*$, so
	\begin{equation} \label{twotothenminusnminusone} \kappa(DB_n,v_*)= 2^{-n}\kappa(DB_n) = 2^{2^n-n-1}. \end{equation}
This familiar number counts \emph{de Bruijn sequences} of order~$n+1$ (Eulerian tours of $DB_n$) up to cyclic equivalence.  De Bruijn sequences are in bijection with oriented spanning trees of $DB_n$ rooted at a fixed vertex~$v_*$; for more on the connection between spanning trees and Eulerian tours, see \cite{EdB51} and \cite[section 5.6]{Stanley}.

Perhaps less familiar is the situation when $G$ is not regular.  As an example, consider the graph
	\[ G = (\{0,1\},\{(0,0),(0,1),(1,0)\}). \]
The vertices of its iterated line graph $\line^n G$ are binary words of length $n+1$ containing no two consecutive~$1$'s.  The number of such words is the Fibonacci number~$F_{n+3}$, and the number of words ending in~$0$ is~$F_{n+2}$.  By Theorem~\ref{iterated}, the number of oriented spanning trees of~$\line^n G$ is
	\[ \kappa(\line^n G) 
	= 2 \cdot 2^{p(n,0)-1} 
	= 2^{F_{n+2}}. \]

Next we turn to the proofs of Theorems~\ref{deBruijn} and~\ref{Kautz}.  If~$a$ and~$b$ are positive integers, we write $\Z_b^a$ for the group $(\Z/b\Z) \oplus \ldots \oplus (\Z/b\Z)$ with~$a$ summands.

\begin{proof}[Proof of Theorem~\ref{deBruijn}]
Induct on $n$.
From (\ref{twotothenminusnminusone}) we have 
	\[ \# \K(DB_n) = 2^{2^n -n-1} \]
hence
	\[ \K(DB_n) = \Z_2^{a_1} \oplus \Z_4^{a_2} \oplus \Z_8^{a_3} \oplus \ldots \oplus \Z_{2^m}^{a_m} \]
for some nonnegative integers $m$ and $a_1, \ldots, a_m$ satisfying
	\begin{equation} \label{totalexponent} \sum_{j=1}^m j a_j = 2^n - n -1. \end{equation}
By Lemma~\ref{multiplesofk} and the inductive hypothesis,
	\begin{align*} 
	\Z_2^{a_2} \oplus \Z_4^{a_3} \oplus \ldots \oplus \Z_{2^{m-1}}^{a_m} &\simeq 2 \K(DB_n) \\ &\simeq  \K(DB_{n-1}) \\
		&\simeq \Z_2^{2^{n-3}} \oplus \Z_4^{2^{n-4}} \oplus \ldots \oplus \Z_{2^{n-2}}.
	\end{align*}
hence $m=n-1$ and
	\[ a_2 = 2^{n-3}, a_3 = 2^{n-4}, \ldots, a_{n-1}=1. \]
Solving (\ref{totalexponent}) for $a_1$ now yields $a_1=2^{n-2}$.
%	\[ \Z_2^{a_2} \oplus \Z_4^{a_3} \oplus \ldots \oplus \Z_{2^{m-1}}^{a_m} = \Z_2^{2^{n-3}} \oplus \Z_4^{2^{n-4}} \oplus \ldots \oplus \Z_{2^{n-2}}. \]
\end{proof}

For $p$ prime, by carrying out the same argument on a general balanced $p$-regular directed graph~$G$ on~$N$ vertices, we find that
	\[ \K(\line^n G) \simeq \tilde{K} \oplus \bigoplus_{j=1}^{n-1} (\Z_{p^j})^{p^{n-1-j}(p-1)^2 N} \oplus (\Z_{p^n})^{(p-1)N-r-1} \oplus \bigoplus_{j=1}^{m} (\Z_{p^{n+j}})^{a_j} \]
	%		\[ \K(\line^n G) \simeq \tilde{K} \oplus \bigoplus_{i=1}^{n-1} \Z_{p^i}^{p^{n-1-i}(p-1)^2 N} \oplus \Z_{p^n}^{(p-1)N-r-1} \oplus \bigoplus_{i=1}^{m} \Z_{p^{n+i}}^{a_i}, \]
where
	\[ \Sylow_p(\K(G)) = (\Z_p)^{a_1} \oplus \ldots \oplus (\Z_{p^m})^{a_m}; \]
	\[ \tilde{K} = \K(G) / \Sylow_p(\K(G)); \]
	\[ r = a_1 + \ldots + a_m. \]
In particular, taking $G=\Kautz_1$ with $p=2$, we have $\K(G)=\tilde{K}=\Z_3$, and we arrive at Theorem~\ref{Kautz}.

\section{Concluding Remarks}
\label{concluding}

Theorem~\ref{mainsequence} describes a map from the sandpile group $\K(\line G, e_*)$ to the group $\K(G,v_*)$ when $G$ is an Eulerian directed graph and $e_*=(w_*,v_*)$ is an edge of~$G$.  
There is also a suggestive numerical relationship between the orders of the sandpile groups $\K(\line G, e_*)$ and $\K(G,w_*)$, which holds even when $G$ is not Eulerian: by equation (\ref{rootedproductformula}) we have
	\[ \kappa(G,w_*) \,|\, \kappa(\line G, e_*) \]
whenever $G$ satisfies the hypothesis of Theorem~\ref{thm:rootedtreeenum}.  
This observation leads us to ask whether $\K(G,w_*)$ can be expressed as a subgroup or quotient group of $\K(\line G, e_*)$.  

The area of spanning trees, Eulerian tours, and sandpile groups is full of simple enumerative results with no known bijective proofs.  To give just one example, the number of de Bruijn sequences of order~$n$ (Eulerian tours of~$DB_{n-1}$) with distinguished starting edge is~$2^{2^{n-1}}$.  Richard Stanley has posed the problem of finding a bijection between ordered pairs of such sequences and all~$2^{2^n}$ binary words of length~$2^n$.  This problem and a number of others could be solved by giving a bijective proof of Theorem~\ref{thm:weightedtreeenum}.  
%We give the final word to Knuth, who concluded his paper \cite{Knuth67} with the following remark: ``The relative simplicity of equation (5) suggests that the above theorem could be established by some combinatorial construction relating subtrees of $D^*$ with subtrees of $D$, but there seems to be no obvious way to do this.''

\end{document}